\newcommand{\curl}{\nabla \times}
\renewcommand{\div}{\nabla \cdot}
\newcommand{\grad}{\nabla}
\newcommand{\cA}{\mathcal{A}}
\DeclareMathOperator{\Ran}{Ran}
\DeclareMathOperator{\Ker}{Ker}
\DeclareMathOperator{\Tr}{Tr}
\newcommand{\enorm}[1]{{\left\vert\kern-0.1ex\left\vert\kern-0.1ex\left\vert #1 \right\vert\kern-0.1ex\right\vert\kern-0.1ex\right\vert}}
\definecolor{mycol}{rgb}{1, 0.0, 0.5}
\newtheorem{remark}{Remark}[section]
\numberwithin{equation}{section}
\numberwithin{theorem}{section}
\title{Mixed and multipoint finite element methods for rotation-based poroelasticity\thanks{Submitted to the editors DATE.\funding{This project has received funding from the European Union's Horizon 2020 research and innovation programme under the Marie Skłodowska-Curie grant agreement No. 101031434 -- MiDiROM.}}}
\author{Wietse M. Boon\thanks{MOX Modeling and Scientific Computing, Politecnico di Milano, Italy ({\email{wietsemarijn.boon@polimi.it}}).}
\and Alessio Fumagalli\footnotemark[2] 
\and Anna Scotti\footnotemark[2]}
\begin{document}

\maketitle

\begin{abstract}
    This work proposes a mixed finite element method for the Biot poroelasticity equations that employs the lowest-order Raviart-Thomas finite element space for the solid displacement and piecewise constants for the fluid pressure. The method is based on the formulation of linearized elasticity as a weighted vector Laplace problem. By introducing the solid rotation and fluid flux as auxiliary variables, we form a four-field formulation of the Biot system, which is discretized using conforming mixed finite element spaces. The auxiliary variables are subsequently removed from the system in a local hybridization technique to obtain a multipoint rotation-flux mixed finite element method. Stability and convergence of the four-field and multipoint mixed finite element methods are shown in terms of weighted norms, which additionally leads to parameter-robust preconditioners. Numerical experiments confirm the theoretical results.
\end{abstract}

\begin{keywords}
    Biot poroelasticity, multipoint mixed finite element methods, weighted norms
\end{keywords}

\begin{AMS}
    65N12, 65N22, 65N30
\end{AMS}

\section{Introduction}

The cornerstone of this work is a reformulation of the linearized elasticity equations as a weighted vector-Laplacian on the displacement. The key is then to consider the mixed formulation of this {problem} by introducing the solid rotation as an auxiliary variable. This allows us to relieve the $H^1$-regularity requirement that is typically placed on the displacement variable.

Rotation-based formulations of elasticity and poroelasticity were recently investigated in \cite{anaya2019mixed} and \cite{anaya2020rotation}, {where} the displacement is sought in $H^1$ and the rotation in $L^2$. In contrast, we seek the displacement in the larger space $H(\div, \Omega)$ and the rotation in the smaller space $H(\curl, \Omega)$. In turn, our variational problem requires its own, distinct \emph{a priori} analysis and leads to {a} different {choice of the} finite element spaces.

An important advantage of the four-field formulation we consider is that it allows for mass-lumping techniques that are common to multipoint mixed finite element methods. The primary example of these methods is the \emph{multipoint flux mixed finite element method (MF-MFEM)} \cite{wheeler2006multipoint}. The method, {traditionally employed for elliptic problems such as Darcy's flow,} employs the Brezzi-Douglas-Marini \cite{brezzi1985two} space to model the Darcy flux and introduces a low-order quadrature rule on its inner product to obtain an approximation of the mass matrix that is easily invertible. In turn, the flux variable can be eliminated locally, leading to a scheme with cell-centered pressures that is closely related to the multipoint flux approximation (MPFA) finite volume method \cite{aavatsmark2002introduction}.

The MF-MFEM was extended to the case of linearized elasticity as the \emph{multipoint stress MFEM (MS-MFEM)} \cite{ambartsumyan2020multipoint}, which has in turn been applied to the Biot equations \cite{ambartsumyan2020coupled} and Stokes flow \cite{caucao2022multipoint}.
From the perspective of exterior calculus, these hybridization techniques were recognized as a way to compute local coderivatives and subsequently generalized to a larger class of mixed finite element spaces \cite{lee2018local}. Based on these results, the \emph{multipoint vorticity MFEM (MV-MFEM}) was recently developed for a vorticity-velocity-pressure formulation of Stokes \cite{boon2022multipoint}.

The similarities between the Stokes and the Biot equations were used in \cite{rodrigo2018new} to construct stabilized mixed finite element discretizations.
Through a similar observation, we herein extend MV-MFEM for Stokes \cite{boon2022multipoint} to a \emph{multipoint rotation mixed finite element method (MR-MFEM}) for linearized elasticity and poroelasticity. As mentioned before, we obtain the MR-MFEM from a rotation-based formulation of elasticity. It therefore employs the Raviart-Thomas ($\mathbb{RT}_0$) space for the solid displacement, as opposed to piecewise constants ($\mathbb{P}_0^n$) in MS-MFEM. The coupling with fluid flow in a poroelastic setting is then naturally incorporated because the divergence of the displacement is well-defined on $\mathbb{RT}_0$.


The paper is organized as follows. First, the conventions concerning notation are introduced in \Cref{sub: Preliminaries and notation} and the model problem is presented in \Cref{sec:model_problem}.
The ensuing sections contain the main contributions of this work, which are as follows:
\begin{itemize}
    \item Well-posedness analysis of a rotation-based, four-field formulation of poroelasticity with the solid rotation in $H(\curl, \Omega)$ and displacement in $H(\div, \Omega)$, using parameter-weighted norms (\Cref{sec:analysis}).
    \item Stability and convergence analysis of two families of mixed finite elements that conform to the four-field formulation (\Cref{sec:discretization}).
    \item A stable and convergent multipoint rotation-flux mixed finite element method for the Biot equations that employs the $\mathbb{RT}_0 \times \mathbb{P}_0$ pair for the solid displacement and fluid pressure (\Cref{sec:a_multipoint_rotation_flux_mfe_method}).
    \item Parameter-robust preconditioners for the mixed finite element methods based on the analysis in weighted norms (\Cref{sec: Parameter-robust preconditioning}).
    \item Numerical experiments that confirm the convergence of the methods and robustness of the preconditioner (\Cref{sec: Numerical results}).
\end{itemize}
Concluding remarks are given in \Cref{sec:conclusions}.

\subsection{Preliminaries and notation}
\label{sub: Preliminaries and notation}

Let $\Omega \subset \mathbb{R}^n$ be a contractible, Lipschitz domain with $n \in \{2, 3\}$. Let $L^2(\Omega)$ be the space of square integrable functions on $\Omega$ and let its inner product be denoted by $\langle \cdot, \cdot \rangle_{\Omega}$. The space $L^2(\Omega)$ is endowed with the norm $\| \cdot \| := \sqrt{\langle \cdot, \cdot \rangle_{\Omega}}$. We {apply the same} notation for the inner product and norm of square-integrable vector functions in $(L^2(\Omega))^n$.

Let $H(\div, \Omega)$ be the subspace of $(L^2(\Omega))^n$ that contains functions with square integrable divergence. Let $\curl$ denote the conventional curl operator in 3D. {Note that} for $n = 2$, {we simply have that}  $\curl r := (- \partial_2 r, \partial_1 r)$ and $\curl u := \partial_2 u_1 - \partial_1 u_2$ for sufficiently regular scalar fields $r$ and vector fields $u$. In turn, let $H(\curl, \Omega)$ be the subspace of $(L^2(\Omega))^{k_n}$, with $k_n := \left(\begin{smallmatrix} n \\ 2 \end{smallmatrix}\right)$, consisting of functions with square integrable curl.

For a Hilbert space $X$, let $X'$ denote its dual, and the corresponding duality pairing is given by $\langle \cdot, \cdot \rangle_{X' \times X}$. For notational brevity, we omit the subscript on duality pairings since it can be deduced from context. Given an operator $A: Z \to Y$ and a subspace $X \subseteq Z$, let $A|_X$ be the restriction of $A$ on $X$. We denote the kernel and range of the restriction by $\Ker(A, X) := \Ker(A|_X)$ and $\Ran(A, X) := \Ran(A|_X)$, respectively.

For $a, b \in \mathbb{R}$, the notation $a \lesssim b$ implies that a constant $C > 0$ exists, independent of material or discretization parameters, such that $C a \le b$.
However, the constant $C$ may depend on the domain $\Omega$ and on the shape-regularity of the mesh.
We use $\gtrsim$ analogously and $a \eqsim b$ if and only if $a \lesssim b \lesssim a$.

\section{Model Problem}
\label{sec:model_problem}

We start in \Cref{sub: elasticity} with linear elasticity to highlight the manipulation of the system that lies at the heart of the proposed numerical methods. The coupling to flow is introduced afterward in \Cref{sub: flow coupling}.

\subsection{Linearized elasticity as a weighted vector-Laplacian}
\label{sub: elasticity}

Let us consider the governing equations of linearized elasticity in terms of the Cauchy stress $\sigma$ and displacement $u$:
\begin{align}
    \sigma &= 2\mu \varepsilon (u) + (\lambda \div u) I, &
    - \div \sigma &= f_u.
\end{align}
Here, $\mu$ and $\lambda$ are the Lam\'e parameters, $f_u$ is a body force, $\varepsilon$ is the symmetric gradient and $I \in \mathbb{R}^{n \times n}$ the identity tensor.
We reformulate these equations by recalling the following calculus identity:
\begin{align}
    - \div \varepsilon ( u )
    = \frac12 \curl (\curl u)
    - \grad (\div u).
\end{align}
We substitute this identity and the definition of $\sigma$ in the momentum balance equation and assume spatially constant $\mu$ to obtain
\begin{align} \label{eq: momentum balance}
    \curl \left(\mu \curl u \right)
    - \grad (2\mu + \lambda) \div u &= f_u
\end{align}

We now define the rotation variable $r := \mu \curl u$, which leads us to the strong form of our 2-field formulation for linear elasticity in terms of $(r, u)$:
\begin{subequations}\label{eq:mechanics}
\begin{align}
    \mu^{-1} r - \curl u &= 0, &
    \curl r - \grad(2\mu + \lambda) \div u &= f_u.
\end{align}
subject to the boundary conditions
\begin{align}
    \begin{aligned}
    \nu \cdot u &= 0, &
    \nu \times r &= 0 , &
    \text{on } \partial_r \Omega, \\
    \nu \times u &= \nu \times u_0, &
    (2\mu + \lambda) \Tr\varepsilon(u) &= \sigma_0 , &
    \text{on } \partial_u \Omega.
    \end{aligned}
\end{align}
\end{subequations}
Here, $\partial_r \Omega \cup \partial_u \Omega$ is a disjoint decomposition of the boundary $\partial \Omega$ and $\nu$ is its outward oriented, unit normal vector. To ensure uniqueness, we assume that the boundary decomposition is such that $\| \nu \cdot \phi \|_{\partial_r \Omega} + \| \nu \times \phi \|_{\partial_u \Omega} > 0$ for all non-zero rigid body motions $\phi$.


To derive the variational formulation of \eqref{eq:mechanics}, we introduce the following Hilbert spaces in which to seek the rotation and displacement variables:
\begin{align}\label{eqs: def R U}
    \begin{aligned}
    R &:= \left\{ r \in H(\curl, \Omega) \mid  \nu \times r = 0 \text{ on } \partial_r \Omega \right\}, \\
    U &:= \left\{ u \in H(\div, \Omega) \mid  \nu \cdot u = 0 \text{ on } \partial_r \Omega \right\}.
    \end{aligned}
\end{align}

By introducing test functions $(\tilde r, \tilde u) \in R \times U$ and using integration by parts, we obtain the variational formulation:
find $(\tilde r, \tilde u) \in R \times U$ such that
\begin{align} \label{eqs: weak elasticity cont}
    \begin{aligned}
    \langle \mu^{-1} r, \tilde r \rangle_\Omega
    - \langle u, \curl \tilde r \rangle_\Omega
    &= \langle u_0, \nu \times \tilde r \rangle_{\partial_u \Omega},
    & \forall \tilde r \in R, \\
    \langle \curl r, \tilde u \rangle_\Omega
    + \langle (2\mu + \lambda) \div u, \div \tilde u \rangle_\Omega &= \langle f_u, \tilde u \rangle_\Omega
    - \langle \sigma_0, \nu \cdot \tilde u \rangle_{\partial_u \Omega},
    & \forall \tilde u \in U.
    \end{aligned}
\end{align}

\begin{remark}
    The boundary conditions in \eqref{eq:mechanics} do not immediately {translate into classical boundary conditions such as} clamped boundaries, i.e. $u = 0$ on $\partial \Omega$. We refer to \cite{arnold2012mixed} for techniques that handle this case.
\end{remark}

\subsection{Coupling to porous medium flow}
\label{sub: flow coupling}

Next, we consider the setting of a poroelastic medium in which fluid flow and solid mechanics form a fully coupled system known as the quasi-steady Biot equations:
\begin{align}\label{eqs: strong form original}
    \begin{aligned}
    \sigma &= 2\mu \varepsilon (u) + (\lambda \div u - \alpha p) I, &
    - \div \sigma &= f_u, \\
    \check q &= - K \grad p , &
    \partial_t (c_0 p + \alpha \div u) + \div \check q &= \check f_p,
    \end{aligned}
\end{align}
with $\check q$ the Darcy flux and $p$ the fluid pressure. Moreover, $K$ is the hydraulic conductivity, $c_0$ the specific storativity, and $\alpha$ is the Biot-Willis constant.

Using the same steps as in \Cref{sub: elasticity}, we rewrite the elasticity equations in terms of rotation and displacement. As time discretization, we choose an implicit method such as the backward Euler or Crank-Nicolson scheme with time step $\Delta t$.

In order to obtain an advantageous scaling with the time step, we introduce the scaled Darcy flux $q := \delta \check q$, with $\delta := \sqrt{\Delta t}$. Different scalings of $\check q$ are possible, but may result in systems that either do not have favorable symmetries, which complicates the analysis, or contain negative powers of $\Delta t$, which can be undesirable in the case of small time steps.

By including {quantities relative to the}  previous time step in the right-hand side $f_p$, we obtain the semi-discrete, four-field formulation of the Biot equations:
\begin{subequations} \label{eqs: Biot strong cont}
\begin{align} \label{eq: Biot system}
    \begin{bmatrix}
        \mu^{-1} & - \curl \\
        \curl & - \grad (2\mu + \lambda) \div & & \grad \alpha \\
        & & K^{-1} & \delta \grad \\
        & \alpha \div & \div \delta & c_0
    \end{bmatrix}
    \begin{bmatrix}
        r \\ u \\ q \\ p
    \end{bmatrix}
    &=
    \begin{bmatrix}
        0 \\ f_u \\ 0 \\ f_p
    \end{bmatrix},
\end{align}
subject to the boundary conditions
\begin{align}
    \begin{aligned}
    \nu \cdot u &= 0, &
    \nu \times r &= 0 , &
    \text{on } \partial_r \Omega, \\
    \nu \times u &= \nu \times u_0, &
    (2\mu + \lambda) \Tr\varepsilon(u) - \alpha p &= \sigma_0 , &
    \text{on } \partial_u \Omega, \\
    p &= p_0,
    \text{  on } \partial_p \Omega, &
    \nu \cdot q &= 0, &
    \text{on } \partial_q \Omega.
    \end{aligned}
\end{align}
\end{subequations}
We assume that $\partial_p \Omega \cup \partial_q \Omega$ forms a disjoint decomposition of the boundary with $|\partial_p \Omega| > 0$.
To facilitate the analysis in the next section, we define the following Hilbert spaces for the fluid flux and pressure:
\begin{align} \label{eqs: def Q P}
    Q &:= \left\{ q \in H(\div, \Omega) \mid \nu \cdot q = 0 \text{ on } \partial_q \Omega \right\}, &
    P &:= L^2(\Omega).
\end{align}


\section{Analysis of the semi-discrete problem}
\label{sec:analysis}

We continue by constructing and analyzing the variational formulation of system \eqref{eqs: Biot strong cont}, which is continuous in space and discrete in time. As short-hand notation, we will use $x := (r, u, q, p)$ and $\tilde x := (\tilde r, \tilde u, \tilde q, \tilde p)$ belonging to the Hilbert space:
\begin{align}
    X := R \times U \times Q \times P,
\end{align}
with the spaces $R, U$ defined in \eqref{eqs: def R U} and $Q, P$ in \eqref{eqs: def Q P}.

For simplicity, let all material parameters be homogenous in space and we moreover assume that the conductivity $K$ is isotropic and thus given by a positive scalar.
To highlight the structure of the system, we define the operators $A: X \to X'$ and $B: X \to X'$ and the functional $f \in X'$ as follows:
\begin{align*}
    \langle Ax, \tilde x \rangle &:=
    \mu^{-1} \langle r, \tilde r \rangle_\Omega
    + (2\mu + \lambda) \langle \div u, \div \tilde u \rangle_\Omega
    + K^{-1} \langle q, \tilde q \rangle_\Omega
    + c_0 \langle p, \tilde p \rangle_\Omega,
    \\
    \langle Bx, \tilde x \rangle &:=
    \langle \curl r, \tilde u \rangle_\Omega
    +
    \langle \div(\alpha u + \delta q), \tilde p \rangle_\Omega, \\
    \langle f, \tilde x \rangle &:=
    \langle u_0, \nu \times \tilde r \rangle_{\partial_u \Omega}
    + \langle f_u, \tilde u \rangle_\Omega
    - \langle \sigma_0, \nu \cdot \tilde u \rangle_{\partial_u \Omega}
    - \langle p_0, \nu \cdot \delta \tilde q \rangle_{\partial_p \Omega}
    + \langle f_p, \tilde p \rangle_\Omega.
\end{align*}

Let the compound operator $\cA: X \to X'$ be such that
\begin{align}
    \cA &:= A + B - B^*,
\end{align}
with $B^*: X \to X'$ the adjoint of $B$ defined by $\langle B^* x, \tilde x \rangle := \langle B \tilde x, x \rangle$.

The variational formulation of \eqref{eqs: Biot strong cont} is then given by: find $x \in X$ such that
\begin{align} \label{eq: Biot weak cont}
    \langle \cA x, \tilde x \rangle &= \langle f, \tilde x \rangle,
    & \forall \tilde x &\in X.
\end{align}

Based on the scaling in the operator $\cA$, we endow $X$ with the following parameter-dependent norm:
\begin{align} \label{eq: norm X}
    \| x \|_X^2 := &\
    \mu^{-1} (\| r \|^2 + \| \curl r \|^2)
    + \mu \| u \|^2 + (2\mu + \lambda)\| \div u \|^2 \nonumber \\
    &+ K^{-1} \| q \|^2
    + \frac{\delta^2}{\eta + c_0} \| \div q \|^2
    + (\eta + c_0) \| p \|^2.
\end{align}
3Here, $\eta := \frac{\alpha^2}{2\mu + \lambda} + \delta^2 K$ is a scaling parameter particularly chosen for the ensuing analysis. In order for these norms to be well-defined, we assume that $\mu$, $\lambda$, $K$, and $(\eta + c_0)$ are positive. However, we do not explicitly bound these parameters away from zero, so that we do not rely on small, lower bounds in our analysis.

It is convenient to analyze problem \eqref{eq: Biot weak cont} using an equivalent energy norm, which we introduce in the following lemma.

\begin{lemma}[Energy norm] \label{lem: norm equiv}
    Let $\Pi$ be the $L^2$-projection onto $\Ran(\curl, R)$ and let the energy norm be given by
    \begin{align} \label{eq: enorm}
        \enorm{x}^2 := &\
        \mu^{-1} (\| r \|^2 + \| \curl r \|^2)
        + \mu \| \Pi u \|^2 + (2\mu + \lambda)\| \div u \|^2 \nonumber \\
        &+ K^{-1} \| q \|^2
        + \frac1{\eta + c_0} \| \div (\alpha u + \delta q) \|^2
        + (\eta + c_0) \| p \|^2.
    \end{align}
    Then the following equivalence holds:
    \begin{align}
        \| x \|_X &\eqsim \enorm{x}, &
        \forall x &\in X.
    \end{align}
\end{lemma}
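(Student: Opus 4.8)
The plan is to establish the norm equivalence term by term, noting that most terms appear identically in both $\|x\|_X$ and $\enorm{x}$. Only two groups of terms differ: the displacement term ($\mu\|u\|^2$ versus $\mu\|\Pi u\|^2$) and the divergence term ($\frac{\delta^2}{\eta+c_0}\|\div q\|^2 + (\eta+c_0)\|p\|^2$ versus $\frac{1}{\eta+c_0}\|\div(\alpha u + \delta q)\|^2 + (\eta+c_0)\|p\|^2$). So I would reduce the statement to two separate equivalences and treat each in isolation.

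For the displacement term, I need $\mu\|u\|^2 \eqsim \mu\|\Pi u\|^2 + (2\mu+\lambda)\|\div u\|^2 + (\text{other terms of }\enorm{x})$. The inequality $\mu\|\Pi u\|^2 \le \mu\|u\|^2$ is immediate since $\Pi$ is an $L^2$-projection. For the reverse direction, I would decompose $u = \Pi u + (I - \Pi) u$, where $(I-\Pi)u$ lies in the orthogonal complement of $\Ran(\curl, R)$. The key is to identify this complement: since $\Omega$ is contractible and using the appropriate Hodge/Helmholtz decomposition of $(L^2(\Omega))^n$ adapted to the boundary conditions defining $R$ and $U$, the orthogonal complement of $\Ran(\curl,R)$ within $U$ should be controlled by the divergence, i.e. $\|(I-\Pi)u\| \lesssim \|\div u\|$ — this is essentially a Poincaré-type inequality saying that a vector field orthogonal to all curls and with controlled divergence is itself controlled. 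Then $\mu\|u\|^2 \lesssim \mu\|\Pi u\|^2 + \mu\|\div u\|^2 \le \mu\|\Pi u\|^2 + (2\mu+\lambda)\|\div u\|^2$, using $\mu \le 2\mu + \lambda$ (valid since $\lambda > -\mu$, which holds here as both are assumed positive, or more carefully $2\mu + \lambda > 0$). I would also need to double-check whether the curl term $\mu^{-1}\|\curl r\|^2$ enters; it should not be needed for this bound.

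For the divergence term, I would use the triangle inequality in both directions together with $\|\div q\|^2 \le 2\|\div(\alpha u + \delta q)\|^2/\delta^2 + 2\alpha^2\|\div u\|^2/\delta^2$ and the analogous bound the other way. The point is that $\frac{\alpha^2}{\eta+c_0}\|\div u\|^2 \le \frac{\alpha^2}{\eta}\|\div u\|^2 \le (2\mu+\lambda)\|\div u\|^2$ by the definition $\eta = \frac{\alpha^2}{2\mu+\lambda} + \delta^2 K \ge \frac{\alpha^2}{2\mu+\lambda}$; this is precisely why $\eta$ was chosen as it was, so the $\|\div u\|$ contribution generated by splitting $\div(\alpha u + \delta q)$ is absorbed by the $(2\mu+\lambda)\|\div u\|^2$ term already present in $\enorm{x}$. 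Conversely, $\frac{\delta^2}{\eta+c_0}\|\div q\|^2$ is bounded by $\frac{1}{\eta+c_0}\|\div(\alpha u+\delta q)\|^2$ plus the same absorbable $\|\div u\|$ term. Combining these two equivalences with the identity of all remaining terms yields $\|x\|_X \eqsim \enorm{x}$.

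The main obstacle will be the Poincaré-type inequality $\|(I-\Pi)u\| \lesssim \|\div u\|$ for $u \in U$: one must verify that the orthogonal complement of $\Ran(\curl, R)$ in $(L^2(\Omega))^n$, intersected with $U$, consists precisely of fields whose $L^2$-norm is controlled by their divergence. This requires care with the mixed boundary conditions (the decomposition $\partial_r\Omega \cup \partial_u\Omega$) and invoking contractibility of $\Omega$ so that there are no harmonic fields obstructing the estimate; I would appeal to a Hodge decomposition / the closed range of $\curl$ on $R$, noting that $\Ran(\curl, R)^\perp \cap U$ is annihilated by $\curl$-test functions and, being divergence-controlled, reduces to $\{0\}$ modulo a finite-dimensional (here trivial, by contractibility and the boundary assumption) space.
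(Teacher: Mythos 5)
Your proposal is correct and follows essentially the same route as the paper: the lower bound via $L^2$-orthogonality of $\Pi$ and a triangle inequality absorbing $\frac{\alpha^2}{\eta+c_0}\|\div u\|^2$ into $(2\mu+\lambda)\|\div u\|^2$ via $\eta \ge \frac{\alpha^2}{2\mu+\lambda}$, and the upper bound via the Poincar\'e inequality on $\Ker(\div,U)^\perp = \Ran(\curl,R)^\perp$ (using contractibility of $\Omega$) applied to $(I-\Pi)u$, whose divergence equals $\div u$. The Poincar\'e-type estimate you flag as the main obstacle is exactly the ingredient the paper invokes, in the same form.
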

\begin{proof}
    Let us consider the lower bound ``$\gtrsim$''. We first show that $\| u \| \gtrsim \| \Pi u \|$ follows immediately from the $L^2$-orthogonality of $\Pi$:
    \begin{subequations} \label{eqs: lower bound}
    \begin{align}
        \| u \|^2 = \| \Pi u \|^2 + \| (I - \Pi) u \|^2 \ge \| \Pi u \|^2.
    \end{align}
    Secondly, we consider the norms on the divergence terms. The triangle-type inequality $\| a  + b\|^2 \le 2(\| a \|^2 + \| b \|^2)$ and the lower bound $\eta + c_0 \ge \frac{\alpha^2}{2\mu + \lambda}$ yield
    \begin{align} \label{eq: lower bound 2}
        \frac1{\eta + c_0} \| \div (\alpha u + \delta q) \|^2
        &\le 2 \left( \frac{\alpha^2}{\eta + c_0} \| \div u \|^2 + \frac{\delta^2}{\eta + c_0} \| \div q \|^2 \right) \nonumber \\
        &\le 2 \left( (2\mu + \lambda) \| \div u \|^2 + \frac{\delta^2}{\eta + c_0} \| \div q \|^2 \right).
    \end{align}
    \end{subequations}
    Collecting \eqref{eqs: lower bound}, we conclude $\| x \|_X \gtrsim \enorm{x}$.

    For the upper bound ``$\lesssim$'', we once again first bound the norm $\| u \|$. The Poincar\'e inequality implies that a $\alpha_u > 0$ exists such that
    \begin{align*}
        \alpha_u \| u \| &\le \| \div u \|,
        & \forall u \perp \Ker(\div, U).
    \end{align*}
    Since $\Omega$ is contractible, we have $\Ker(\div, U) = \Ran(\curl, R)$ and we use the Poincar\'e inequality to derive
    \begin{subequations} \label{eqs: upper bound}
    \begin{align}
        \mu \| u \|^2
        &\le \mu \| \Pi u \|^2 + \frac1{\alpha_u^2} \mu \| \div ((I - \Pi) u) \|^2 \nonumber\\
        &= \mu \| \Pi u \|^2 + \frac1{\alpha_u^2} \mu \| \div u \|^2 \nonumber\\
        &\le \max\left\{ \frac1{2\alpha_u^2} , 1 \right\} \left( \mu \| \Pi u \|^2 + (2\mu + \lambda) \| \div u \|^2 \right).
    \end{align}

    It remains to show a bound on the divergence terms. Using the same triangle-type inequality and lower bound on $\eta + c_0$ as in \eqref{eq: lower bound 2}, we derive
    \begin{align*}
        \frac1{\eta + c_0} \| \div (\alpha u + \delta q) \|^2
        &\ge \frac1{\eta + c_0} \left( \frac12 \| \div \delta q \|^2 - \| \div \alpha u \|^2 \right)\\
        &\ge \frac12 \frac{\delta^2}{\eta + c_0} \| \div q \|^2 - (2\mu + \lambda) \| \div u \|^2.
    \end{align*}
    From this result, we deduce
    \begin{align}
        (2\mu + \lambda) \| \div u \|^2
        &+ \frac1{\eta + c_0} \| \div (\alpha u + \delta q) \|^2 \nonumber\\
        &\ge (2\mu + \lambda) \| \div u \|^2 + \frac23 \frac1{\eta + c_0} \| \div (\alpha u + \delta q) \|^2 \nonumber\\
        &\ge \frac13 \left( (2\mu + \lambda) \| \div u \|^2 + \frac{\delta^2}{\eta + c_0} \| \div q \|^2 \right).
    \end{align}
    \end{subequations}
    Collecting \eqref{eqs: upper bound}, we have $\| x \|_X \lesssim \enorm{x}$ and the result follows.
\end{proof}

With the energy norm defined {by \eqref{eq: enorm}}, we are now ready to prove the key prerequisites for well-posedness of \eqref{eq: Biot weak cont}, namely the continuity and inf-sup conditions on $\cA$.

\begin{lemma}[Continuity] \label{lem: continuity cont}
    The operator $\cA: X \to X'$ is continuous:
    \begin{align}
        \| \cA x \|
        &:= \sup_{\tilde x \in X}
        \frac{\langle \cA x, \tilde x \rangle}{\| \tilde x \|_X} \lesssim \| x \|_X,
        & \forall x \in X.
    \end{align}
\end{lemma}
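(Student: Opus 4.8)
The plan is to prove continuity by decomposing $\cA = A + B - B^*$ and bounding each of the three pieces separately, using that $\langle \cA x, \tilde x \rangle = \langle A x, \tilde x\rangle + \langle B x, \tilde x\rangle - \langle B \tilde x, x\rangle$. It suffices to show that each of $\langle A x, \tilde x\rangle$, $\langle B x, \tilde x\rangle$ and $\langle B \tilde x, x\rangle$ is bounded by a constant times $\| x \|_X \| \tilde x \|_X$; dividing by $\| \tilde x \|_X$ and taking the supremum over $\tilde x \in X$ then yields the claim. The whole argument is nothing more than repeated Cauchy--Schwarz, with the weights in $\| \cdot \|_X$ distributed so that the parameter-dependent constants cancel.

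For $A$ the bound is immediate: each of the four terms $\mu^{-1}\langle r, \tilde r\rangle_\Omega$, $(2\mu+\lambda)\langle \div u, \div \tilde u\rangle_\Omega$, $K^{-1}\langle q, \tilde q\rangle_\Omega$ and $c_0 \langle p, \tilde p\rangle_\Omega$ is split symmetrically between $x$ and $\tilde x$ and matched to the corresponding term of $\| \cdot \|_X$; for the pressure term one additionally uses $c_0 \le \eta + c_0$. For $B$, the coupling $\langle \curl r, \tilde u\rangle_\Omega$ is controlled by pairing $\mu^{-1/2}\|\curl r\|$ (bounded by $\| x \|_X$) with $\mu^{1/2}\|\tilde u\|$ (bounded by $\| \tilde x \|_X$). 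The coupling $\langle \div(\alpha u + \delta q), \tilde p\rangle_\Omega$ is handled by writing it as $\alpha\langle \div u, \tilde p\rangle_\Omega + \delta\langle \div q, \tilde p\rangle_\Omega$: in the first summand we bound $\|\div u\|$ against the $(2\mu+\lambda)$-weighted term of $\| x \|_X$ and $\|\tilde p\|$ against the $(\eta+c_0)$-weighted term of $\| \tilde x \|_X$, the product of the two weights being $\frac{\alpha^2}{(2\mu+\lambda)(\eta+c_0)} \le 1$ since $\eta \ge \frac{\alpha^2}{2\mu+\lambda}$; in the second summand we bound $\frac{\delta}{\sqrt{\eta+c_0}}\|\div q\|$ against $\| x \|_X$ and $\sqrt{\eta+c_0}\,\|\tilde p\|$ against $\| \tilde x \|_X$, the weights now cancelling exactly. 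The bound on $\langle B \tilde x, x\rangle$ is identical with the roles of $x$ and $\tilde x$ interchanged, and summing the three estimates completes the proof.

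There is no genuine obstacle here -- continuity is the easy half of the well-posedness argument -- but the step to watch is the $\langle \div(\alpha u + \delta q), \tilde p\rangle_\Omega$ term, which is the only place where the specific definition of $\eta$ and the presence of the $\frac{\delta^2}{\eta+c_0}\| \div q \|^2$ term in $\| \cdot \|_X$ are exploited; these were chosen precisely so that this coupling is continuous with a constant independent of the material and time-step parameters. Alternatively, one could invoke \Cref{lem: norm equiv} and bound this term in one shot against the $\frac{1}{\eta+c_0}\| \div (\alpha u + \delta q) \|^2$ contribution to $\enorm{\cdot}$, but the direct estimate above avoids appealing to the norm equivalence and makes the role of the weights transparent.
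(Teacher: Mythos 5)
Your proof is correct and follows essentially the same route as the paper: term-by-term Cauchy--Schwarz with the parameter weights distributed so that all constants cancel. The only (harmless) difference is that you work directly in $\| \cdot \|_X$, splitting the $\langle \div(\alpha u + \delta q), \tilde p \rangle_\Omega$ coupling and using $\eta \ge \frac{\alpha^2}{2\mu+\lambda}$ explicitly, whereas the paper bounds $\langle \cA x, \tilde x\rangle \lesssim \enorm{x}\,\enorm{\tilde x}$ in the energy norm (writing $\langle \curl r, \tilde u\rangle_\Omega = \langle \curl r, \Pi\tilde u\rangle_\Omega$ to match the $\|\Pi u\|$ term) and then invokes the norm equivalence of \Cref{lem: norm equiv} -- an alternative you yourself note.
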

\begin{proof}
    First, we apply the Cauchy-Schwarz inequality to each of the terms, e.g.
    \begin{align*}
        \mu^{-1} \langle r, \tilde r \rangle_\Omega
        &\le
        \mu^{-1} \| r \| \| \tilde r \|, &
        \langle \curl r, \tilde u \rangle_\Omega
        = \langle \curl r, \Pi \tilde u \rangle_\Omega
        &\le
        \frac1{\sqrt{\mu}} \| r \| \sqrt{\mu} \| \Pi \tilde u \|.
    \end{align*}
    After summing all products, we use the Cauchy-Schwarz once more to conclude that
    $\langle \cA x, \tilde x \rangle \lesssim \enorm{x} \ \enorm{\tilde x}$. The result in $\| \cdot \|_X$ follows from the norm equivalence shown in \Cref{lem: norm equiv}.
\end{proof}

\begin{lemma}[Inf-sup] \label{lem: infsup cont}
    The operator $\cA$ is bounded from below as
    \begin{align}
        \| \cA x \| &\gtrsim \| x \|_X,
        & \forall x \in X.
    \end{align}
\end{lemma}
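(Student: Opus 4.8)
The plan is to prove the inf-sup bound by the standard ``construct a good test function'' technique: given $x = (r,u,q,p) \in X$, I will build $\tilde x = (\tilde r, \tilde u, \tilde q, \tilde p) \in X$ with $\enorm{\tilde x} \lesssim \enorm{x}$ and $\langle \cA x, \tilde x\rangle \gtrsim \enorm{x}^2$; by \Cref{lem: norm equiv} this is equivalent to the stated bound. Because $\cA = A + B - B^*$ is a generalized saddle-point operator, the natural choice is $\tilde x = x + \theta\, \xi$ for a small fixed $\theta > 0$, where the first summand recovers the coercive part $\langle Ax, x\rangle = \mu^{-1}\|r\|^2 + (2\mu+\lambda)\|\div u\|^2 + K^{-1}\|q\|^2 + c_0\|p\|^2$, and the perturbation $\xi$ is designed to control the remaining terms in $\enorm{x}^2$: namely $\mu^{-1}\|\curl r\|^2$, $\mu\|\Pi u\|^2$, and $\frac1{\eta+c_0}\|\div(\alpha u + \delta q)\|^2$. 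Note that since $\langle (B - B^*)x, x\rangle = 0$, the skew part contributes nothing when testing against $x$ itself, so all the ``missing'' terms must come from $\langle \cA x, \theta\xi\rangle$.

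The construction of $\xi = (\xi_r, \xi_u, \xi_q, \xi_p)$ proceeds term by term. (i) To recover $\mu\|\Pi u\|^2$: take $\xi_r \in R$ solving a curl-div problem so that $\curl \xi_r = \Pi u$ with $\mu^{-1}(\|\xi_r\|^2 + \|\curl\xi_r\|^2) \lesssim \mu\|\Pi u\|^2$ — this uses the surjectivity of $\curl: R \to \Ran(\curl,R)$ together with a Poincaré-type estimate (the norm on $\xi_r$ must come out with the $\mu^{-1}$ weight, which is why $\curl\xi_r = \Pi u$ rather than $= \mu^{-1}\Pi u$; a careful weighting check is needed). Testing the second equation of $\cA x$ against $\xi_r$ in the displacement slot produces $\langle \curl r, \cdot\rangle$ cross terms and the desired $\langle \curl \xi_r, \mu\,(\text{something})\rangle$; actually one tests the \emph{first} row against $\xi_r$: $\mu^{-1}\langle r,\xi_r\rangle - \langle u, \curl\xi_r\rangle = \mu^{-1}\langle r,\xi_r\rangle - \|\Pi u\|^2$, so this slot contributes $-\|\Pi u\|^2$ (up to sign conventions, to be fixed), with the $\mu^{-1}\langle r,\xi_r\rangle$ term absorbed by Young's inequality into $\mu^{-1}\|r\|^2$. (ii) To recover $\mu^{-1}\|\curl r\|^2$: set $\xi_u \in \Ran(\curl, R) \subseteq U$ with $\div$-compatible scaling so that the term $\langle \curl r, \xi_u\rangle$ appears with the right sign; since $\xi_u$ lies in the kernel of $\div$, it does not disturb the divergence terms, and $\mu\|\xi_u\|^2 \lesssim \mu^{-1}\|\curl r\|^2$ after choosing $\xi_u = \mu^{-1}\curl r$ (using $\curl r \in \Ran(\curl,R)$). (iii) To recover $\frac1{\eta+c_0}\|\div(\alpha u + \delta q)\|^2$: exploit the bottom row $\langle \div(\alpha u + \delta q), \tilde p\rangle + c_0\langle p,\tilde p\rangle$ by choosing $\xi_p = \frac1{\eta+c_0}\div(\alpha u+\delta q) \in P = L^2(\Omega)$, which directly yields that term, with the $c_0\langle p, \xi_p\rangle$ contribution Young-absorbed into $(\eta+c_0)\|p\|^2$. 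One must also check the $\frac{\delta^2}{\eta+c_0}\|\div q\|^2$ piece of $\enorm{\cdot}$ is comparable to a combination already controlled — but \Cref{lem: norm equiv} already folds this into $\frac1{\eta+c_0}\|\div(\alpha u + \delta q)\|^2$ and $(2\mu+\lambda)\|\div u\|^2$, so no separate work is needed.

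The main obstacle I anticipate is the simultaneous, \emph{parameter-robust} bookkeeping: each cross term generated by $\langle \cA x, \theta\xi\rangle$ (such as $\mu^{-1}\langle r,\xi_r\rangle$, $K^{-1}\langle q,\xi_q\rangle$, $\langle\div \xi_u, \alpha \tilde p\rangle$ couplings, and the $\div(\alpha\xi_u + \delta\xi_q)$ interactions) must be split by Young's inequality with constants chosen so that the ``bad'' halves land on terms already present in $\langle Ax,x\rangle$ with weights that match \emph{exactly} — no hidden dependence on $\mu$, $\lambda$, $K$, $c_0$, or $\delta$. The definition of $\eta = \frac{\alpha^2}{2\mu+\lambda} + \delta^2 K$ is clearly engineered precisely so that $\frac{\alpha^2}{\eta+c_0} \le 2\mu+\lambda$ and $\frac{\delta^2 K}{\eta+c_0}\le 1$, which is what makes the cross terms from $\xi_p$ absorbable; verifying that these inequalities suffice at every occurrence, and then choosing $\theta$ small enough (but independent of all parameters) to close the estimate $\langle\cA x, x + \theta\xi\rangle \gtrsim \enorm{x}^2$ while $\enorm{x+\theta\xi}\lesssim\enorm{x}$, is the delicate part. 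I would organize the final computation as: (1) establish the three auxiliary lifting estimates for $\xi_r$, $\xi_u$, $\xi_p$ (the $\xi_q$ slot is likely taken to be zero or a simple multiple of $\div$-data); (2) expand $\langle \cA x, \theta\xi\rangle$, collecting good terms and isolating cross terms; (3) apply Young's inequality uniformly using the $\eta$-identities; (4) fix $\theta$ and conclude.
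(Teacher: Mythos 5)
Your overall strategy---build $\tilde x$ as $x$ plus correctors, use liftings of $\curl$ and $\div$, and absorb cross terms by Young's inequality using the definition of $\eta$---is the same as the paper's, and your choices $\curl\xi_r = \Pi u$, $\xi_u = \mu^{-1}\curl r$, and $\xi_p = \frac{1}{\eta+c_0}\div(\alpha u + \delta q)$ coincide with the paper's test functions $\tilde x_2$ and $\tilde x_1$. However, there is a genuine gap: your construction never recovers the term $(\eta + c_0)\,\|p\|^2$ of the energy norm. Testing against $x$ itself only yields $c_0\|p\|^2$, and since the paper does not bound $c_0$ away from zero (only $\eta + c_0 > 0$ is assumed), the contribution $\eta\|p\|^2$ with $\eta = \frac{\alpha^2}{2\mu+\lambda} + \delta^2 K$ must be produced by an additional test function. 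In the degenerate case $c_0 = 0$ your $\tilde x$ gives no control of $\|p\|$ whatsoever, so $\langle \cA x, \tilde x\rangle \gtrsim \enorm{x}^2$ fails. The paper closes this with a fourth component $\tilde x_3 = (0, -\frac{\alpha}{2\mu+\lambda}\tilde u_p, -\beta_q \delta K \tilde q_p, 0)$, where $\tilde u_p \in U$ and $\tilde q_p \in Q$ are inf-sup liftings satisfying $\div\tilde u_p = \div\tilde q_p = p$ with $\Pi\tilde u_p = \Pi\tilde q_p = 0$ and norms controlled by $\|p\|$; pairing with the momentum and Darcy rows then produces $\left(\frac{\alpha^2}{2\mu+\lambda} + \beta_q\delta^2 K\right)\|p\|^2 \eqsim \eta\|p\|^2$, with the cross terms absorbed into $(2\mu+\lambda)\|\div u\|^2$ and $K^{-1}\|q\|^2$. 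Your remark that ``the $\xi_q$ slot is likely taken to be zero'' is therefore exactly where the argument breaks down: a nontrivial $\xi_q$ (and a nontrivial correction in the $u$ slot) is indispensable.

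A secondary, fixable issue: in step (i) you ask for $\mu^{-1}(\|\xi_r\|^2 + \|\curl\xi_r\|^2) \lesssim \mu\|\Pi u\|^2$ with $\curl\xi_r = \Pi u$, which would force $\mu^{-1} \lesssim \mu$ and is not parameter-robust. The correct bookkeeping is to scale the lifting by $\mu$ (the paper uses $\tilde x_2 = (-\beta_r\mu\tilde r_u,0,0,0)$ with $\curl\tilde r_u = \Pi u$), so that $\mu^{-1}\|\beta_r\mu\tilde r_u\|^2 = \beta_r^2\mu\|\tilde r_u\|^2 \lesssim \mu\|\Pi u\|^2$. You flagged the weighting as needing care, but as written the estimate does not hold uniformly in $\mu$.
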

\begin{proof}
    We aim to show that, for given $x \in X$, a test function $\tilde x \in X$ exists with the properties
    \begin{align} \label{eq: to prove for inf-sup}
        \langle \cA x, \tilde x \rangle &\gtrsim \enorm{x}^2, &
        \enorm{\tilde x} &\lesssim \enorm{x}.
    \end{align}
    The equivalence from \Cref{lem: norm equiv} will then provide the result in the norm $\| \cdot \|_X$.

    We construct this test function explicitly in four parts. First, let $\tilde x_0 := x = (r, u, q, p)$ for which we derive
    \begin{align} \label{eq: test function 0}
        \langle \cA x, \tilde x_0 \rangle &=
        \langle A x, x \rangle =
        \mu^{-1} \| r \|^2
        + (2\mu + \lambda) \| \div u \|^2
        + K^{-1} \| q \|^2
        + c_0 \| p \|^2.
    \end{align}

    The second test function we consider is given by $\tilde x_1 := (0, \mu^{-1} \curl r, 0, \frac1{\eta_0 + c_0} \div(\alpha u + \delta q))$ with $\eta_0 \ge 0$ to be chosen later. Using the orthogonality $\div \curl r = 0$, we derive using the Cauchy-Schwarz and Young's inequality:
    \begin{align} \label{eq: test function 1}
        \langle \cA x, \tilde x_1 \rangle &=
        \mu^{-1} \| \curl r \|^2
        + \frac1{\eta_0 + c_0} \| \div(\alpha u + \delta q) \|^2
        + \langle \frac{c_0}{\eta_0 + c_0} p, \div(\alpha u + \delta q) \rangle_\Omega
        \nonumber \\
        &\ge \mu^{-1} \| \curl r \|^2
        + \frac1{\eta_0 + c_0} \| \div(\alpha u + \delta q) \|^2
        - \| p \| \| \div(\alpha u + \delta q) \|
        \nonumber \\
        &\ge \mu^{-1} \| \curl r \|^2
        + \frac1{2(\eta_0 + c_0)} \| \div(\alpha u + \delta q) \|^2
        - \frac{\eta_0 + c_0}2 \| p \|^2.
    \end{align}

    The last two components are constructed by exploiting the inf-sup conditions on the operators that compose $B$. The first of these implies that a $\beta_r > 0$ exists such that for each $u \in U$, there exists a $\tilde r_u \in R$ with
    \begin{align*}
        \curl \tilde r_u &= \Pi u, &
        \beta_r \left( \| \tilde r_u \|^2 + \| \curl \tilde r_u \|^2 \right) &\le \| \Pi u \|^2.
    \end{align*}
    We use $\tilde r_u$ to define $\tilde x_2 := (- \beta_r \mu \tilde r_u, 0,0, 0)$. The Cauchy-Schwarz and Young inequalities give us:
    \begin{align} \label{eq: test function 2}
        \langle \cA x, \tilde x_2 \rangle &=
        \beta_r \mu \| \Pi u \|^2
        - \beta_r \langle r, \tilde r_u \rangle_\Omega
        \nonumber \\
        &\ge
        \beta_r \mu \| \Pi u \|^2
        - \frac12 \left(\mu^{-1}\| r \|^2 + \beta_r^2 \mu\| \tilde r_u \|^2 \right) \nonumber\\
        &\ge
        \frac{\beta_r}{2} \mu \| \Pi u \|^2
        - \frac12 \mu^{-1}\| r \|^2
    \end{align}

    The final test function is constructed similarly. Two constants $\beta_u, \beta_q > 0$ exist such that for each $p \in P$, there exists a pair $(\tilde u_p, \tilde q_p) \in U \times Q$ with the properties
        \begin{align*}
            \div \tilde u_p &= p, &
            \Pi \tilde u_p &=0 , &
            \beta_u \left( \| \tilde u_p \|^2 + \| \div \tilde u_p \|^2 \right) &\le \| p \|^2, \\
            \div \tilde q_p &= p, &
            \Pi \tilde q_p &=0 , &
            \beta_q \left( \| \tilde q_p \|^2 + \| \div \tilde q_p \|^2 \right) &\le \| p \|^2
        \end{align*}
    These allow us to define $\tilde x_3 := (0, - \frac{\alpha}{2\mu + \lambda} \tilde u_p, - \beta_q \delta K \tilde q_p, 0)$. Let us use the short-hand notation $\eta_0 := \frac12 \left(\frac{\alpha^2}{2\mu + \lambda} + \beta_q \delta^2 K \right)$. We then proceed as in \eqref{eq: test function 2} to derive
    \begin{align} \label{eq: test function 3}
        \langle \cA x, \tilde x_3 \rangle &=
        \left( \frac{\alpha^2}{2\mu + \lambda} + \beta_q \delta^2 K \right)\| p \|^2
        - \alpha \langle \div u, p \rangle_\Omega
        - \beta_q \delta \langle q, \tilde q_p \rangle_\Omega \nonumber\\
        &\ge
        2 \eta_0 \| p \|^2 \nonumber\\
        &\quad
        - \frac12 \left(
        (2\mu + \lambda) \| \div u \|^2 + \frac{\alpha^2}{2\mu + \lambda} \| p \|^2
        +  K^{-1} \| q \|^2 +  \beta_q^2 \delta^2 K \|\tilde q_p \|^2 \right) \nonumber\\
        &\ge
        \eta_0 \| p \|^2
        - \frac12
        (2\mu + \lambda) \| \div u \|^2
        - \frac12 K^{-1} \| q \|^2.
    \end{align}

    Letting $\tilde x := \sum_i \tilde x_i$, we sum \eqref{eq: test function 0}, \eqref{eq: test function 1}, \eqref{eq: test function 2}, and \eqref{eq: test function 3} to obtain
    \begin{align} \label{eq: lower bound on A}
        \langle \cA x, \tilde x \rangle &\ge
        \frac12 \Bigg(
        \mu^{-1} \| r \|^2
        + \frac2{\mu} \| \curl r \|^2
        + \beta_r \mu \| \Pi u \|^2
        + (2\mu + \lambda) \| \div u \|^2 \nonumber \\
        &\quad
        + \frac1{\eta_0 + c_0} \| \div(\alpha u + \delta q) \|^2
        + K^{-1} \| q \|^2
        + (\eta_0 + c_0) \| p \|^2
        \Bigg) \nonumber\\
        &\gtrsim \enorm{x}^2,
    \end{align}
    where we used $\eta_0 \eqsim \eta$ in the final inequality since $\min\{1, \beta_q\} \eta \le 2\eta_0 \le \max\{1, \beta_q\} \eta$.

    It remains to show that $\tilde x$ is bounded. Clearly, we have $\enorm{\tilde x_0} = \enorm{x}$, so we continue with the bounds for the remaining $x_i$:
    \begin{align}\label{eqs: bounds on test functions}
        \begin{aligned}
        \enorm{\tilde x_1}^2 &=
        \mu \| \mu^{-1}\curl r \|^2
        + \frac{\eta + c_0}{(\eta_0 + c_0)^2} \| \div (\alpha u + \delta q) \|^2 \\
        &\lesssim
        \mu^{-1} \| \curl r \|^2
        + \frac1{\eta + c_0} \| \div (\alpha u + \delta q) \|^2
        \le \enorm{x}^2 \\
        \enorm{\tilde x_2}^2 &=
        \mu \beta_r^2 (\| \tilde r_u \|^2 + \| \curl \tilde r_u \|^2)
        \lesssim \mu \| \Pi u \|^2
        \le \enorm{x}^2 \\
        \enorm{\tilde x_3}^2 &=
        \frac{\alpha^2}{2\mu + \lambda} \| p \|^2
        + \beta_q^2 \delta^2 K \| \tilde q_p \|^2
        + \frac1{\eta + c_0} \| 2\eta_0 p \|^2  \\
        &\lesssim
        \eta \| p \|^2
        \le \enorm{x}^2.
        \end{aligned}
    \end{align}

    Collecting \eqref{eqs: bounds on test functions}, we have $\enorm{\tilde x} \lesssim \enorm{x}$. Together with \eqref{eq: lower bound on A}, we have shown \eqref{eq: to prove for inf-sup} and the result follows by \Cref{lem: norm equiv}.
\end{proof}

\begin{theorem}[Well-posedness] \label{thm: well-posed cont}
    Problem \eqref{eq: Biot weak cont} admits a unique solution $x \in X$ that satisfies
    \begin{align} \label{eq: bound on solution}
        \| x \|_X \lesssim \| f \|_{X'} := \sup_{\tilde x \in X} \frac{\langle f, \tilde x \rangle}{\| \tilde x \|_X}.
    \end{align}
\end{theorem}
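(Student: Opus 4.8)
The plan is to obtain \Cref{thm: well-posed cont} from the abstract Banach--Ne\v{c}as--Babu\v{s}ka (generalized Lax--Milgram) theorem, for which \Cref{lem: continuity cont} and \Cref{lem: infsup cont} already do most of the work. Indeed, \Cref{lem: continuity cont} is exactly the boundedness of the bilinear form $(x,\tilde x)\mapsto\langle\cA x,\tilde x\rangle$ on $X\times X$, and \Cref{lem: infsup cont}, $\|\cA x\|\gtrsim\|x\|_X$, is the inf-sup condition; the latter already yields that $\cA:X\to X'$ is injective and has closed range. The one remaining ingredient is nondegeneracy in the second argument, i.e.\ that $\langle\cA x,\tilde x\rangle=0$ for all $x\in X$ forces $\tilde x=0$. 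Since the annihilator of $\Ran(\cA)$ coincides with $\Ker\cA^*$, where $\langle\cA^*\tilde x,x\rangle:=\langle\cA x,\tilde x\rangle$, this amounts to showing $\Ker\cA^*=\{0\}$, which promotes the closed range of $\cA$ to surjectivity.

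The key observation for this last point is the structure $\cA=A+B-B^*$ with $A$ self-adjoint and $B-B^*$ skew-adjoint, so that $\cA^*=A-B+B^*$ has exactly the same form as $\cA$ after the replacement $B\mapsto-B$. Consequently, the explicit test-function construction in the proof of \Cref{lem: infsup cont} carries over essentially verbatim to $\cA^*$: one flips the signs of the $B$-dependent components of $\tilde x_1,\tilde x_2,\tilde x_3$, after which the positive diagonal contributions in \eqref{eq: test function 0}--\eqref{eq: test function 3} are unchanged while the indefinite cross terms merely change sign and are still absorbed by the same Young's inequalities. This gives $\|\cA^*\tilde x\|\gtrsim\|\tilde x\|_X$ for all $\tilde x\in X$, and in particular $\Ker\cA^*=\{0\}$.

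Putting the pieces together, boundedness, the lower bound on $\cA$ (injectivity and closed range), and the lower bound on $\cA^*$ (dense range) show that $\cA:X\to X'$ is an isomorphism, so the problem \eqref{eq: Biot weak cont}, i.e.\ $\cA x=f$, has a unique solution $x=\cA^{-1}f$; here $f\in X'$ as soon as the boundary data $u_0,\sigma_0,p_0$ lie in the appropriate trace spaces, making $\langle u_0,\nu\times\tilde r\rangle_{\partial_u\Omega}$, $\langle\sigma_0,\nu\cdot\tilde u\rangle_{\partial_u\Omega}$ and $\langle p_0,\nu\cdot\delta\tilde q\rangle_{\partial_p\Omega}$ bounded on $R$, $U$ and $Q$. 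The a priori estimate \eqref{eq: bound on solution} then follows by applying \Cref{lem: infsup cont} to the solution itself: $\|x\|_X\lesssim\|\cA x\|=\|f\|_{X'}$. The only step that is not entirely routine is the transfer of the inf-sup bound to $\cA^*$, but because of the symmetry of $A$ and the skew-symmetry of $B-B^*$ this is really a bookkeeping check rather than a genuine obstacle, so I expect the theorem to drop out quickly once \Cref{lem: continuity cont} and \Cref{lem: infsup cont} are in place.
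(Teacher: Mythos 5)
Your proposal is correct and follows essentially the same route as the paper: both invoke the Babu\v{s}ka--Lax--Milgram (Banach--Ne\v{c}as--Babu\v{s}ka) theorem using \Cref{lem: continuity cont} and \Cref{lem: infsup cont}, and both obtain the remaining adjoint nondegeneracy from the symmetric-plus-skew structure $\cA = A + B - B^*$. The only cosmetic difference is that the paper transfers the inf-sup bound to $\cA^*$ in one line via the sign-flip identity $\langle \cA \tilde x, x\rangle = \langle \cA(r,-u,-q,p),(\tilde r,-\tilde u,-\tilde q,\tilde p)\rangle$ (the sign flip being an isometry of $X$), whereas you re-run the test-function construction with flipped signs; these are the same idea.
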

\begin{proof}
    We aim to utilize the Babu\v{s}ka-Lax-Milgram theorem. For this, we need to show that for each $x \in X$, a $\tilde x \in X$ exists such that
    $\langle \cA \tilde x, x \rangle > 0$.
    The symmetries of $\cA$ allow us to write:
    \begin{align} \label{eq: lower bound A*}
        \langle \cA \tilde x, x \rangle
        = \langle \cA (\tilde r, \tilde u, \tilde q, \tilde p), (r, u, q, p) \rangle
        = \langle \cA (r, -u, -q, p), (\tilde r, -\tilde u, -\tilde q, \tilde p) \rangle
    \end{align}
    In turn, we use \Cref{lem: infsup cont} to construct $\tilde x$ such that $\langle \cA \tilde x, x \rangle \gtrsim \| x \|_X > 0$.

    Combining \eqref{eq: lower bound A*} with \Cref{lem: continuity cont,lem: infsup cont}, we invoke the Babu\v{s}ka-Lax-Milgram theorem to conclude that a unique solution exists that satisfies \eqref{eq: bound on solution}
\end{proof}

\begin{corollary}
    The elasticity problem \eqref{eqs: weak elasticity cont} admits a unique solution $(r, u) \in R \times U$ that is bounded in the norm
    \begin{align}
        \| (r, u) \|_{R \times U}^2 := &\
        \mu^{-1} (\| r \|^2 + \| \curl r \|^2)
        + \mu \| u \|^2 + (2\mu + \lambda)\| \div u \|^2.
    \end{align}
\end{corollary}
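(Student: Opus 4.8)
The plan is to recognize the elasticity problem \eqref{eqs: weak elasticity cont} as the flow-free restriction of the Biot problem \eqref{eq: Biot weak cont} and to deduce the corollary from \Cref{thm: well-posed cont}, or, equivalently, to replay the arguments of \Cref{lem: norm equiv,lem: continuity cont,lem: infsup cont} on the smaller space $R \times U$.

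For the first route, I would set the Biot--Willis constant $\alpha = 0$, keep $K$ any fixed positive scalar, and take $c_0 = 0$, $f_p = 0$, $p_0 = 0$. With $\alpha = 0$ the coupling term becomes $\langle Bx, \tilde x \rangle = \langle \curl r, \tilde u \rangle_\Omega + \langle \div \delta q, \tilde p \rangle_\Omega$, so $\cA$ is block-diagonal with respect to the splitting $X = (R \times U) \times (Q \times P)$: the $(r,u)$-block is exactly the operator of \eqref{eqs: weak elasticity cont}, and the $(q,p)$-block is a standard mixed Darcy operator. Since $\eta = \delta^2 K > 0$, the hypotheses of \Cref{thm: well-posed cont} hold and the full system is well-posed; as the flow data vanish and the $(q,p)$-block is itself invertible, the unique solution has $q = 0$ and $p = 0$, whence $(r,u)$ solves \eqref{eqs: weak elasticity cont} uniquely. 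Evaluating $\| x \|_X$ at $q = p = 0$ gives precisely $\| (r,u) \|_{R\times U}^2$, and $\| f \|_{X'}$ reduces to the dual norm of the elasticity functional, so \eqref{eq: bound on solution} becomes the stated bound.

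Alternatively, and more transparently, one repeats the proofs directly on $R \times U$. The energy norm $\enorm{(r,u)}^2 := \mu^{-1}(\| r \|^2 + \| \curl r \|^2) + \mu \| \Pi u \|^2 + (2\mu + \lambda) \| \div u \|^2$ is equivalent to $\| (r,u) \|_{R\times U}$ by the argument of \Cref{lem: norm equiv} with all flux and pressure terms removed, using only the Hodge identity $\Ker(\div, U) = \Ran(\curl, R)$ and the Poincar\'e inequality. Continuity follows from term-by-term Cauchy--Schwarz as in \Cref{lem: continuity cont}, and the inf-sup bound follows from \Cref{lem: infsup cont} using only $\tilde x_0 = (r,u)$, the $u$-component $\mu^{-1}\curl r$ of $\tilde x_1$, and $\tilde x_2 = (-\beta_r \mu \tilde r_u, 0)$; the contributions involving $p$, $q$, and $\tilde x_3$ vanish identically. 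The symmetry argument of \Cref{thm: well-posed cont} then yields existence and uniqueness via the Babu\v{s}ka--Lax--Milgram theorem.

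I do not expect a genuine obstacle: the corollary is the $\alpha = 0$, flow-free specialization of results already in hand. The only point needing a line of care is checking that $\| \cdot \|_X$ and the right-hand side functional restrict correctly to $R \times U$ when $q = p = 0$, which is immediate from their definitions.
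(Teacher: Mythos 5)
Your first route is exactly the paper's own argument: set $\alpha = 0$ so that \eqref{eq: Biot weak cont} decouples, discard the Darcy block, and read off the bound from \Cref{thm: well-posed cont} with $\| (r,u,0,0) \|_X = \| (r,u) \|_{R \times U}$. The proposal is correct and, beyond spelling out a few details the paper leaves implicit (vanishing flow data forcing $q = p = 0$, positivity of $\eta$), takes essentially the same approach.
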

\begin{proof}
    Let $\alpha = 0$, then the system \eqref{eq: Biot weak cont} decouples into the elasticity problem \eqref{eqs: weak elasticity cont} and a Darcy flow problem. If we neglect the Darcy problem, then \Cref{thm: well-posed cont} gives us that the unique solution $x$ is bounded in the norm $\| x \|_X = \| (r, u, 0, 0) \|_X = \| (r, u) \|_{R \times U}$.
\end{proof}

\section{Conforming four-field MFE discretization}
\label{sec:discretization}

In this section, we introduce a conforming mixed finite element discretization of the four-field formulation \eqref{eq: Biot weak cont}. Let $\Omega_h$ be a shape-regular, simplicial tesselation of the domain $\Omega$. We define the discrete space $X_h := R_h \times U_h \times Q_h \times P_h$ such that the following assumptions hold
\begin{enumerate}[label=A\arabic*., ref=A\arabic*]
    \item \label{ass: conforming}
        The finite element spaces are conforming, i.e. $X_h \subset X$.
    \item \label{ass: infsup Poisson}
        The pairs $Q_h \times P_h$ and $U_h \times P_h$ satisfy
        \begin{align*}
            \Ran(\div, Q_h) = \Ran(\div, U_h) = P_h
        \end{align*}
        and are inf-sup stable for the mixed formulation of the Poisson equation, i.e.
        \begin{align*}
            \sup_{\tilde q_h \in Q_h} \frac{\langle \div \tilde q_h, p_h \rangle_\Omega}{\| \tilde q_h \| + \| \div \tilde q_h \|} &\gtrsim \| p_h \|, &
            \sup_{\tilde u_h \in U_h} \frac{\langle \div \tilde u_h, p_h \rangle_\Omega}{\| \tilde u_h \| + \| \div \tilde u_h \|} &\gtrsim \| p_h \|, &
            \forall p_h &\in P_h.
        \end{align*}
    \item \label{ass: infsup curl}
        The pair $R_h \times U_h$ satisfies $\Ran(\curl, R_h) = \Ker(\div, U_h)$ and
        \begin{align*}
            \sup_{\tilde r_h \in R_h} \frac{\langle \curl \tilde r_h, u_h \rangle_\Omega}{\| \tilde r_h \| + \| \curl \tilde r_h \|} &\gtrsim \| u_h \|, &
            \forall u_h &\in U_h.
        \end{align*}
\end{enumerate}

\begin{remark}
    Assumptions \ref{ass: infsup Poisson} and \ref{ass: infsup curl} can be relaxed to general inf-sup stable pairs of finite elements that do not satisfy $\Ran(\div, Q_h) \subseteq P_h$, $\Ran(\div, U_h) \subseteq P_h$, and $\Ran(\curl, R_h) \subseteq U_h$. However, in order to ease the upcoming analysis, we consider these stronger assumptions.
\end{remark}
\begin{remark}
    These assumptions are different from the Stokes-Biot stability conditions introduced in \cite[Def. 3.1]{rodrigo2018new} since we do note require a Stokes-stable pair $U_h \times P_h$. On the other hand, we need the additional space $R_h$ to capture the solid rotations.
\end{remark}

We focus on two families of discretizations that satisfy these assumptions. For given polynomial degree $k \ge 0$, the first of these families is given by
\begin{align} \label{eq: def X_h1}
    X_h^{(1)} := \left(\mathbb{N}_k^{(1)} \times \mathbb{RT}_k \times \mathbb{RT}_k \times \mathbb{P}_k \right) \cap X.
\end{align}
In particular, the rotation variable is discretized using N\'ed\'elec elements of the first kind \cite{nedelec1980mixed}. Here, $k$ denotes the polynomial degree of the tangential traces on mesh edges. The displacement and fluid flux are both discretized using Raviart-Thomas elements of order $k$, which implies that the normal traces of these functions on mesh faces are of polynomial degree $k$. Finally, the pressure variable is sought in the space of discontinuous, elementwise polynomials of degree $k$. The intersection with $X$ ensures that the essential boundary conditions are respected.

The second family we consider is defined as:
\begin{align} \label{eq: def X_h2}
    X_h^{(2)} := \left(\mathbb{N}_{k + 1}^{(2)} \times \mathbb{RT}_k \times \mathbb{BDM}_{k + 1} \times \mathbb{P}_k \right) \cap X.
\end{align}
In this case, the N\'ed\'elec elements of the second kind are used to discretize the rotation variable. The notation $\mathbb{N}_{k + 1}^{(2)}$ implies that the basis functions of $R_h$ are given by polynomials of degree $k + 1$ on the mesh edges. Similarly, the fluid flux is here given by the Brezzi-Douglas-Marini space $\mathbb{BDM}_{k + 1}$ which has normal traces on mesh faces of polynomial degree $k + 1$.

In 2D, {since rotation is a scalar,} the two families of discrete spaces employ the continuous Lagrange elements of order $k + 1$ for the rotation space $R_h$, denoted by $\mathbb{L}_{k + 1}$.

With the discrete space $X_h \subset X$ defined, we are ready to formulate the \emph{four-field mixed finite element method (4F-MFEM)}:
find $x_h \in X_h$ such that
\begin{align} \label{eq: Biot weak disc}
    \langle \cA x_h, \tilde x_h \rangle &= \langle f, \tilde x_h \rangle,
    & \forall \tilde x_h &\in X_h.
\end{align}

The two main results are presented next, namely the stability of the 4F-MFEM in \Cref{sub:stability} and its convergence in \Cref{sub:convergence}.

\subsection{Stability}
\label{sub:stability}

The analysis of \eqref{eq: Biot weak disc} follows the same steps as in \Cref{sec:analysis}.
First, the continuity bound $\langle A x_h, \tilde x_h \rangle \lesssim \| x_h \|_X \| \tilde x_h \|_X, \ \forall x_h, \tilde x_h \in X_h$ is immediate by \Cref{lem: continuity cont} and the conformity \ref{ass: conforming}. The inf-sup condition is considered in the following lemma.

\begin{lemma}[Inf-sup] \label{lem: inf-sup disc}
    If $X_h$ satisfies \ref{ass: conforming}--\ref{ass: infsup curl}, then the following bound holds
    \begin{align}
        \sup_{\tilde x_h \in X_h} \frac{\langle \cA x_h, \tilde x_h \rangle}{\| \tilde x_h \|_X}
        &\gtrsim \| x_h \|_X,
        & \forall x_h &\in X_h.
    \end{align}
\end{lemma}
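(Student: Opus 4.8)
The plan is to repeat, mutatis mutandis, the proof of \Cref{lem: infsup cont}, the only structural change being that the $L^2$-projection $\Pi$ onto $\Ran(\curl, R)$ is replaced by the $L^2$-projection $\Pi_h$ onto $\Ran(\curl, R_h)$, which by \ref{ass: infsup curl} coincides with $\Ker(\div, U_h)$. I would first record a discrete counterpart of \Cref{lem: norm equiv}: letting $\enorm{\cdot}_h$ be defined as in \eqref{eq: enorm} but with $\Pi_h$ in place of $\Pi$, one has $\| x_h \|_X \eqsim \enorm{x_h}_h$ for all $x_h \in X_h$. Its proof copies that of \Cref{lem: norm equiv} verbatim, with the $L^2$-orthogonality of $\Pi_h$ replacing that of $\Pi$, the identity $\Ker(\div, U_h) = \Ran(\curl, R_h)$ from \ref{ass: infsup curl} replacing the use of contractibility, and the continuous Poincar\'e inequality replaced by a discrete one, namely $\| u_h \| \lesssim \| \div u_h \|$ for $u_h \in U_h$ with $u_h \perp \Ker(\div, U_h)$. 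This discrete Poincar\'e inequality follows from the $U_h \times P_h$ inf-sup condition in \ref{ass: infsup Poisson}: since $\div u_h \in \Ran(\div, U_h) = P_h$, there is a bounded right inverse $v_h \in U_h$ with $\div v_h = \div u_h$ and $\| v_h \| \lesssim \| \div u_h \|$; then $u_h - v_h \in \Ker(\div, U_h)$, hence $\| u_h \|^2 = \langle u_h, v_h \rangle_\Omega \le \| u_h \| \| v_h \|$.

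Next I would construct the test function $\tilde x_h := \tilde x_0 + \tilde x_1 + \tilde x_2 + \tilde x_3 \in X_h$ exactly as in \Cref{lem: infsup cont}, the point being to check that each summand lies in $X_h$ — which is precisely the role of \ref{ass: conforming}--\ref{ass: infsup curl}. Here $\tilde x_0 := x_h$ is trivially in $X_h$; for $\tilde x_1 := (0, \mu^{-1} \curl r_h, 0, \frac1{\eta_0 + c_0} \div(\alpha u_h + \delta q_h))$ one uses that $\curl r_h \in \Ran(\curl, R_h) \subseteq U_h$ and that $\div(\alpha u_h + \delta q_h) \in P_h$ by the range identities $\Ran(\div, U_h) = \Ran(\div, Q_h) = P_h$ in \ref{ass: infsup Poisson}; the component $\tilde x_2 := (-\beta_r \mu \tilde r_u, 0, 0, 0)$ uses the bounded right inverse of $\curl \colon R_h \to \Ran(\curl, R_h)$ furnished by the inf-sup condition in \ref{ass: infsup curl}, so that $\curl \tilde r_u = \Pi_h u_h$ with $\| \tilde r_u \|^2 + \| \curl \tilde r_u \|^2 \lesssim \| \Pi_h u_h \|^2$; and $\tilde x_3 := (0, -\frac{\alpha}{2\mu + \lambda} \tilde u_p, -\beta_q \delta K \tilde q_p, 0)$ uses the bounded right inverses of $\div$ on $U_h$ and on $Q_h$ from \ref{ass: infsup Poisson}, where the $\Ran(\curl, R_h) = \Ker(\div, U_h)$-component of the preimage $\tilde u_p$ is subtracted off to enforce $\Pi_h \tilde u_p = 0$ without altering $\div \tilde u_p$ or spoiling the stability bound.

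With every summand inside $X_h$, the estimates \eqref{eq: test function 0}--\eqref{eq: test function 3}, their combination \eqref{eq: lower bound on A}, and the bounds \eqref{eqs: bounds on test functions} all go through unchanged with $\Pi$ replaced by $\Pi_h$, yielding $\langle \cA x_h, \tilde x_h \rangle \gtrsim \enorm{x_h}_h^2$ and $\enorm{\tilde x_h}_h \lesssim \enorm{x_h}_h$. The discrete norm equivalence $\| \cdot \|_X \eqsim \enorm{\cdot}_h$ on $X_h$ then delivers the claimed bound in $\| \cdot \|_X$.

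The only real work beyond transcribing \Cref{lem: infsup cont} is bookkeeping: verifying that none of its four test functions leaves $X_h$ — which is exactly why \ref{ass: infsup Poisson} and \ref{ass: infsup curl} are stated with the surjectivity identities $\Ran(\div, Q_h) = \Ran(\div, U_h) = P_h$ and $\Ran(\curl, R_h) = \Ker(\div, U_h)$ rather than plain inf-sup stability — together with the short derivation of the discrete Poincar\'e inequality and discrete norm equivalence above. I expect the membership check for $\tilde x_2$ (producing $\tilde r_u \in R_h$ with $\curl \tilde r_u = \Pi_h u_h$) to be the most delicate point, but it is handled directly by \ref{ass: infsup curl}; no new analytical ingredient beyond \Cref{sec:analysis} is needed.
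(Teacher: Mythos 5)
Your proposal is correct and follows essentially the same route as the paper's proof: define the discrete energy norm with $\Pi_h$ in place of $\Pi$, verify the norm equivalence and the four test-function constructions of \Cref{lem: infsup cont} inside $X_h$ using the range identities in \ref{ass: infsup Poisson} and \ref{ass: infsup curl}, and conclude by norm equivalence. You merely spell out details the paper leaves implicit (the discrete Poincar\'e inequality via the bounded right inverse of $\div$ on $U_h$, and the membership checks for each $\tilde x_{h,i}$), all of which are accurate.
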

\begin{proof}
    First, we define the discrete analogue of the energy norm \eqref{eq: enorm} as
    \begin{align} \label{eq: enorm h}
        \enorm{x_h}_h^2 := &\
        \mu^{-1} (\| r_h \|^2 + \| \curl r_h \|^2)
        + \mu \| \Pi_h u_h \|^2 + (2\mu + \lambda)\| \div u_h \|^2 \nonumber \\
        &+ K^{-1} \| q_h \|^2
        + \frac1{\eta + c_0} \| \div (\alpha u_h + \delta q_h) \|^2
        + (\eta + c_0) \| p_h \|^2.
    \end{align}
    with $\Pi_h$ the $L^2$-projection on $\Ran(\curl, R_h)$. The equivalence $\| x_h \|_X \eqsim \enorm{x_h}_h$ holds for all $x_h \in X_h$ by the same arguments as in \Cref{lem: norm equiv}, using \ref{ass: conforming} and $\Ker(\div, U_h) = \Ran(\curl, R_h)$ from assumption \ref{ass: infsup curl}.

    Next, we follow the proof of \Cref{lem: infsup cont} for a discrete function $x_h \in X_h$ and $\Pi$ replaced by $\Pi_h$. Each of the test functions $\tilde x_{h,i} \in X_h$ can be created in analogy to $\tilde x_i$ due to assumptions \ref{ass: infsup Poisson} and \ref{ass: infsup curl}. Thus, defining $\tilde x_h := \sum_i \tilde x_{h,i}$, we have
    $\langle \cA x_h, \tilde x_h \rangle \gtrsim \enorm{x_h}_h^2$ and $\enorm{\tilde x_h}_h \lesssim \enorm{x_h}_h$. The equivalence of norms then provides the result.
\end{proof}

The stability of the mixed finite element method, which forms the main result of this section, now follows by the same arguments as in \Cref{thm: well-posed cont}.

\begin{theorem}[Stability] \label{thm: stability}
    If assumptions \ref{ass: conforming}--\ref{ass: infsup curl} are satisfied, then the discrete problem \eqref{eq: Biot weak disc} admits a unique solution $x_h \in X_h$ that satisfies
    \begin{align} \label{eq: bound on discr solution}
        \| x_h \|_X \lesssim \| f \|_{X_h'} := \sup_{\tilde x_h \in X_h} \frac{\langle f, \tilde x_h \rangle}{\| \tilde x_h \|_X}.
    \end{align}
\end{theorem}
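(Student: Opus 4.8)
The plan is to mirror the argument used in the continuous setting, since Theorem~\ref{thm: stability} is the discrete analogue of \Cref{thm: well-posed cont}. The three ingredients needed are: continuity of $\cA$ on $X_h$, the discrete inf-sup condition, and an argument that the discrete problem (being square and finite-dimensional) is uniquely solvable once injectivity is established. Continuity has already been noted to follow from \Cref{lem: continuity cont} together with the conformity assumption \ref{ass: conforming}, since $X_h \subset X$ means the continuity estimate simply restricts. The discrete inf-sup condition is exactly \Cref{lem: inf-sup disc}, which holds under assumptions \ref{ass: conforming}--\ref{ass: infsup curl}.

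\smallskip
The first step is to reproduce the symmetry identity \eqref{eq: lower bound A*} at the discrete level: for $\tilde x_h, x_h \in X_h$,
\begin{align*}
    \langle \cA \tilde x_h, x_h \rangle
    = \langle \cA (r_h, -u_h, -q_h, p_h), (\tilde r_h, -\tilde u_h, -\tilde q_h, \tilde p_h) \rangle,
\end{align*}
which is valid because the sign flip $(r,u,q,p) \mapsto (r,-u,-q,p)$ maps $X_h$ to itself (each factor space is a linear subspace closed under negation) and because $A$ is symmetric while $B - B^*$ is skew. Applying \Cref{lem: inf-sup disc} to the flipped argument then yields, for any $x_h \in X_h$, a test function $\tilde x_h \in X_h$ with $\langle \cA \tilde x_h, x_h \rangle \gtrsim \| x_h \|_X > 0$ whenever $x_h \neq 0$. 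This shows that the linear operator associated with \eqref{eq: Biot weak disc} is injective on the finite-dimensional space $X_h$, hence bijective, so a unique discrete solution $x_h$ exists.

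\smallskip
The second step is the stability bound \eqref{eq: bound on discr solution}. Given the solution $x_h$, \Cref{lem: inf-sup disc} supplies a $\tilde x_h \in X_h$ with $\langle \cA x_h, \tilde x_h \rangle \gtrsim \| x_h \|_X \| \tilde x_h \|_X$; dividing by $\| \tilde x_h \|_X$ and using $\langle \cA x_h, \tilde x_h \rangle = \langle f, \tilde x_h \rangle$ gives
\begin{align*}
    \| x_h \|_X \lesssim \sup_{\tilde x_h \in X_h} \frac{\langle \cA x_h, \tilde x_h \rangle}{\| \tilde x_h \|_X}
    = \sup_{\tilde x_h \in X_h} \frac{\langle f, \tilde x_h \rangle}{\| \tilde x_h \|_X}
    = \| f \|_{X_h'},
\end{align*}
which is the claimed estimate. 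Alternatively, one can invoke a discrete Babu\v{s}ka--Lax--Milgram (Ne\v{c}as) theorem directly from the continuity and discrete inf-sup conditions, exactly as in the proof of \Cref{thm: well-posed cont}, and obtain existence, uniqueness, and the bound in one stroke.

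\smallskip
I do not expect a genuine obstacle here: the work has all been front-loaded into \Cref{lem: inf-sup disc}, whose proof already carefully re-runs the construction of the four test functions $\tilde x_{h,i}$ using the discrete inf-sup hypotheses \ref{ass: infsup Poisson}--\ref{ass: infsup curl}. The only point requiring a moment's care is confirming that the test-function construction of \Cref{lem: infsup cont} stays inside $X_h$ --- in particular that $\Pi_h u_h \in \Ran(\curl, R_h) \subset U_h$ and that the elasticity/Darcy inf-sup preimages can be chosen in $R_h$, $U_h$, $Q_h$ --- but this is precisely what assumptions \ref{ass: infsup Poisson} and \ref{ass: infsup curl} guarantee, and it has already been addressed in \Cref{lem: inf-sup disc}. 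Hence the proof of \Cref{thm: stability} is essentially a one-paragraph repetition of the argument of \Cref{thm: well-posed cont} with $X$ replaced by $X_h$.
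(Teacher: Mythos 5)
Your proposal is correct and follows exactly the route the paper takes: the paper's proof of this theorem is simply the remark that the same arguments as in \Cref{thm: well-posed cont} apply, i.e.\ combining the continuity inherited by conformity, the discrete inf-sup condition of \Cref{lem: inf-sup disc}, and the sign-flip symmetry identity \eqref{eq: lower bound A*} restricted to $X_h$ to invoke the Babu\v{s}ka--Lax--Milgram argument. Your additional observation that injectivity plus finite-dimensionality already gives bijectivity is a harmless (and standard) shortcut, not a different method.
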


\subsection{Convergence}
\label{sub:convergence}

Let $\Pi_R$, $\Pi_U$, $\Pi_Q$, and $\Pi_P$ be the canonical interpolation operators onto the respective finite element spaces, defined for sufficiently regular $(r, u, q, p) \in X$. These operators have the following approximation properties for the finite element families $X_h^{(1)}$ and $X_h^{(2)}$ from \eqref{eq: def X_h1} and \eqref{eq: def X_h2}:
\begin{align*}
    \| (I - \Pi_R) r \| &\lesssim h^{\bar k + 1} \| r \|_{\bar k + 1}, &
    \| \curl(I - \Pi_R) r \| &\lesssim h^{k + 1} \| \curl r \|_{k + 1}, \\
    \| (I - \Pi_U) u \| &\lesssim h^{k + 1} \| u \|_{k + 1}, &
    \| \div(I - \Pi_U) u \| &\lesssim h^{k + 1} \| \div u \|_{k + 1}, \\
    \| (I - \Pi_Q) q \| &\lesssim h^{\bar k + 1} \| q \|_{\bar k + 1}, &
    \| \div(I - \Pi_Q) q \| &\lesssim h^{k + 1} \| \div q \|_{k + 1}, \\
    \| (I - \Pi_P) p \| &\lesssim h^{k + 1} \| p \|_{k + 1}.
\end{align*}
Here, $\| \cdot \|_{k}$ denotes the $H^k(\Omega)$-norm and we have $\bar k = k$ for $X_h := X_h^{(1)}$ and $\bar k = k + 1$ for $X_h := X_h^{(2)}$. Let the composite interpolant $\Pi_X$ be defined for sufficiently regular elements of $X$ such that
\begin{align}
    \Pi_X(r, u, q, p) :=
    (\Pi_R r, \Pi_U u, \Pi_Q q, \Pi_P p).
\end{align}

\begin{theorem}[Error estimate] \label{thm: Error estimate}
    If $x$, the solution to \eqref{eq: Biot weak cont}, is sufficiently regular, and $X_h$ is chosen as $X_h^{(1)}$ or $X_h^{(2)}$ with polynomial degree $k \ge 0$, then the solution $x_h \in X_h$ to \eqref{eq: Biot weak disc} converges as
    \begin{align}
        \| x_h - x \|_X \le C h^{k + 1} \Big(&
        h^{\bar k - k}\| r \|_{\bar k + 1} + \| \curl r \|_{k + 1}
        + \| u \|_{k + 1} + \| \div u \|_{k + 1} \nonumber \\
        &+ h^{\bar k - k}\| q \|_{\bar k + 1}
        + \| \div q \|_{k + 1}
        + \| p \|_{k + 1} \Big),
    \end{align}
    with $C \ge 0$ possibly depending on the material parameters.
\end{theorem}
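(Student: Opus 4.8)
The plan is to follow the standard Céa-type argument for the Babuška–Lax–Milgram setting: combine the discrete inf-sup stability of \Cref{lem: inf-sup disc}, the continuity of $\cA$ (\Cref{lem: continuity cont}), and Galerkin orthogonality, and then invoke the interpolation estimates listed above. Concretely, I would first establish the quasi-optimality bound
\begin{align*}
    \| x_h - x \|_X \lesssim \inf_{\tilde x_h \in X_h} \| x - \tilde x_h \|_X,
\end{align*}
by writing $x_h - \tilde x_h \in X_h$, applying \Cref{lem: inf-sup disc} to it, using $\langle \cA(x_h - x), \cdot \rangle = 0$ on $X_h$ (which holds since $X_h \subset X$ by \ref{ass: conforming} and both $x$ and $x_h$ solve the same variational problem tested against $X_h$), and then bounding $\langle \cA(x - \tilde x_h), \cdot\rangle \lesssim \|x - \tilde x_h\|_X \|\cdot\|_X$ via continuity. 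A triangle inequality $\|x - x_h\|_X \le \|x - \tilde x_h\|_X + \|\tilde x_h - x_h\|_X$ then closes the quasi-optimality estimate.

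Second, I would choose $\tilde x_h = \Pi_X x = (\Pi_R r, \Pi_U u, \Pi_Q q, \Pi_P p)$ and estimate $\|x - \Pi_X x\|_X$ term by term using the definition \eqref{eq: norm X} of $\|\cdot\|_X$ together with the stated approximation properties. One must be slightly careful with the mixed divergence term $\tfrac{\delta^2}{\eta + c_0}\|\div q\|^2$: here I would use the commuting-diagram property of the Raviart–Thomas / BDM interpolant, $\div \Pi_Q q = P_h \div q$ with $P_h$ the $L^2$-projection, so that $\|\div(I - \Pi_Q) q\| = \|(I - P_h)\div q\| \lesssim h^{k+1}\|\div q\|_{k+1}$, which is exactly the stated rate; similarly for $\div(I - \Pi_U)u$. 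Collecting all seven contributions and factoring out $h^{k+1}$, keeping track that the $\|r\|$ and $\|q\|$ terms carry the extra factor $h^{\bar k - k}$, gives the asserted right-hand side. The material-parameter prefactors ($\mu^{-1}, \mu, 2\mu+\lambda, K^{-1}, \tfrac{\delta^2}{\eta+c_0}, \eta + c_0$) are absorbed into the constant $C$, as the statement explicitly permits $C$ to depend on parameters.

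The only mild subtlety — and the step I would treat most carefully rather than call routine — is verifying that the canonical interpolants $\Pi_X x$ actually lie in $X_h$, i.e. that they respect the essential boundary conditions encoded in $R$, $U$, $Q$ (the conditions $\nu\times r = 0$ on $\partial_r\Omega$, $\nu\cdot u = 0$ on $\partial_r\Omega$, $\nu\cdot q = 0$ on $\partial_q\Omega$); this is standard for Nédélec/RT/BDM interpolants on boundary-adapted meshes but should be noted so that Galerkin orthogonality is legitimately applicable. Everything else is bookkeeping: the quasi-optimality is an immediate consequence of \Cref{lem: inf-sup disc,lem: continuity cont}, and the final bound follows by inserting the listed approximation rates into each weighted term of $\|\cdot\|_X$.
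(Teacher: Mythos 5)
Your proposal is correct and follows essentially the same route as the paper: discrete inf-sup applied to $x_h - \Pi_X x$, Galerkin orthogonality from conformity, continuity of $\cA$, a triangle inequality, and the listed interpolation estimates. The extra care you take with the commuting-diagram property and the boundary conditions of the canonical interpolants is consistent with (and implicit in) the paper's stated approximation properties.
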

\begin{proof}
    As shown in the proof of \Cref{lem: inf-sup disc}, for each $y_h \in X_h$, a $\tilde y_h$ exists such that $\langle \cA y_h, \tilde y_h \rangle \gtrsim \| y_h \|_X^2$ and $\| \tilde y_h \|_X \lesssim \| y_h \|_X$. Consider $y_h = x_h - \Pi_X x \in X_h$ and let us use the properties of $\tilde y_h$ with the consistency $X_h \subset X$ and continuity of $\cA$ from \Cref{lem: continuity cont} to derive
    \begin{align*}
        \| x_h - \Pi_X x \|_X^2
        \lesssim \langle \cA (x_h - \Pi_X x), \tilde y_h \rangle
        &= \langle \cA (x - \Pi_X x), \tilde y_h \rangle \\
        &\lesssim \| (I - \Pi_X) x \|_X \| x_h - \Pi_X x \|_X.
    \end{align*}
    A triangle inequality now gives us
    \begin{align*}
        \| x_h - x \|_X
        &\le
        \| x_h - \Pi_X x \|_X
        + \| (I - \Pi_X) x \|_X
        \lesssim \| (I - \Pi_X) x \|_X,
    \end{align*}
    and the approximation properties of $\Pi_X$ conclude the proof.
\end{proof}

\section{A multipoint rotation-flux MFE method}
\label{sec:a_multipoint_rotation_flux_mfe_method}

We continue by considering the lowest order instance of the finite element family of the second kind, i.e. $X_h^{(2)}$ from \eqref{eq: def X_h2} with $k = 0$. In 3D, this space contains two degrees of freedom per edge for the rotation $r \in \mathbb{N}_1^{(2)}$ and three degrees of freedom per face for the flux $q \in \mathbb{BDM}_1$.
By choosing an appropriate quadrature rule for the inner products in $R_h$ and $Q_h$, we can localize the mass matrix around the vertices. In turn, the variables $(r,q)$ can be eliminated through static condensation and we obtain a multipoint mixed finite element method for the Biot system with $(u,p) \in \mathbb{RT}_0 \times \mathbb{P}_0$ in \Cref{sub:static_condensation}. The stability and convergence of this method are shown in \Cref{sub:analysis_reducible}.

\subsection{Static condensation}
\label{sub:static_condensation}

Following \cite{lee2018local}, we consider a specific quadrature rule for functions $\varphi, \phi$ on a simplicial mesh:
\begin{align} \label{eq: quadrature rule}
    \langle \phi, \varphi \rangle_h :=
    \sum_{\omega \in \Omega_h} \frac{|\omega|}{n + 1} \sum_{\bm x \in \mathcal{V}(\omega)} \phi_\omega(\bm x) \cdot \varphi_\omega(\bm x)
\end{align}
in which $\phi_\omega$ is the restriction of $\phi$ on an element $\omega \in \Omega_h$ and $\mathcal{V}(\omega)$ is the set of its vertices. The dot product herein reduces to the standard product for scalar $\phi, \varphi$.

We emphasize that $\langle \phi, \varphi \rangle_h$ is non-zero if and only if the basis functions $\phi, \varphi$ are non-zero at the same vertex of a mesh element. It has two more advantageous properties, which we highlight in the following lemma.

\begin{lemma}[{\cite[Thm. 4.1]{lee2018local}}] \label{lem: properties quad}
The norm $\| \cdot \|_h$ induced by the inner product \eqref{eq: quadrature rule} is equivalent to the $L^2$-norm on $R_h$ and $Q_h$:
\begin{align}
    \| \phi \|_h &\eqsim \| \phi \|, &
    \| \varphi \|_h &\eqsim \| \varphi \|, &
    \forall (\phi, \varphi) &\in R_h \times Q_h.
\end{align}
Moreover, the quadrature rule is exact if the test function is elementwise constant:
\begin{subequations}
\begin{align}
    \langle r_h, \mathring{r}_h \rangle_h
    &= \langle r_h, \mathring{r}_h \rangle_\Omega, &
    \forall r_h \in
    R_h,\ \mathring{r}_h &\in \mathring{R}_h := \mathbb{P}_0^{k_n}, \\
    \langle q_h, \mathring{q}_h \rangle_h
    &= \langle q_h, \mathring{q}_h \rangle_\Omega, &
    \forall q_h \in
    Q_h,\ \mathring{q}_h &\in \mathring{Q}_h := \mathbb{P}_0^n.
\end{align}
\end{subequations}
\end{lemma}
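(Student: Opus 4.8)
The statement reproduces \cite[Thm. 4.1]{lee2018local}, and the plan is to recall the argument in the present notation. The starting point is that both quadratic forms under consideration assemble element by element: since $\phi_\omega$ denotes the restriction of $\phi$ to $\omega$, one has $\|\phi\|_h^2 = \sum_{\omega\in\Omega_h}\tfrac{|\omega|}{n+1}\sum_{\bm x\in\mathcal V(\omega)}|\phi_\omega(\bm x)|^2$ and $\|\phi\|^2 = \sum_{\omega\in\Omega_h}\|\phi_\omega\|_{L^2(\omega)}^2$, with no coupling between neighbouring cells. It therefore suffices to prove, on a single element $\omega$ and with constants independent of $\omega$, that $\phi_\omega\mapsto(\tfrac{|\omega|}{n+1}\sum_{\bm x\in\mathcal V(\omega)}|\phi_\omega(\bm x)|^2)^{1/2}$ is equivalent to $\|\cdot\|_{L^2(\omega)}$ on the local spaces $R_h|_\omega$ and $Q_h|_\omega$, and then to sum over the mesh.

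The crucial observation is that at the lowest order $k=0$ these local spaces are the full affine spaces, $R_h|_\omega = (\mathcal P_1(\omega))^{k_n}$ and $Q_h|_\omega = (\mathcal P_1(\omega))^n$. First I would note that a scalar affine function on a non-degenerate simplex vanishing at all $n+1$ vertices is identically zero, since the vertices form a unisolvent set of evaluation functionals for $\mathcal P_1$; hence $\phi_\omega\mapsto(\sum_{\bm x\in\mathcal V(\omega)}|\phi_\omega(\bm x)|^2)^{1/2}$ is a genuine norm on the finite-dimensional local space. As any two norms on a finite-dimensional space are equivalent, the sought equivalence holds on a fixed reference simplex, and a standard scaling argument --- mapping to and from $\omega$ by the relevant affine pullback or Piola transform and using shape-regularity to control the Jacobian factors, which enter both norms to the same power --- transfers it to an arbitrary $\omega$ with uniform constants. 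Summation over $\Omega_h$ then gives $\|\cdot\|_h\eqsim\|\cdot\|$ on $R_h$ and $Q_h$.

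For the exactness identities, the plan is to invoke the fact that the vertex rule $\int_\omega g\,dx=\tfrac{|\omega|}{n+1}\sum_{\bm x\in\mathcal V(\omega)}g(\bm x)$ is exact for every affine $g$ on a simplex (for instance, from $\int_\omega\lambda_i\,dx=|\omega|/(n+1)$ for the barycentric coordinates $\lambda_i$). Given $r_h\in R_h$ and an elementwise constant $\mathring r_h\in\mathring R_h=\mathbb P_0^{k_n}$, on each $\omega$ the integrand $r_h\cdot\mathring r_h$ is a sum of products of an affine function with a constant, hence affine, so the vertex rule reproduces $\int_\omega r_h\cdot\mathring r_h\,dx$ exactly; summing over $\omega\in\Omega_h$ yields $\langle r_h,\mathring r_h\rangle_h=\langle r_h,\mathring r_h\rangle_\Omega$. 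The identity for $(q_h,\mathring q_h)\in Q_h\times\mathring Q_h$ follows in exactly the same way, since $\mathbb{BDM}_1$ is elementwise affine.

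The only step that genuinely requires care --- and the reason the construction is confined to $k=0$ --- is the positive-definiteness of $\|\cdot\|_h$ on $R_h$ and $Q_h$: it rests entirely on vertex-unisolvence of the lowest-order affine local spaces, which fails at higher polynomial degree, where the vertex values no longer separate the local space and the lumped mass form would degenerate. Everything else amounts to the elementwise assembly and scaling bookkeeping outlined above.
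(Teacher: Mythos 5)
Your proposal is correct. Note that the paper itself offers no proof of this lemma: it is imported verbatim from \cite[Thm.~4.1]{lee2018local}, so there is no in-text argument to compare against. Your reconstruction supplies the standard one, and the three ingredients you isolate are exactly the right ones: (i) both quadratic forms decouple over elements, so everything reduces to a single simplex; (ii) at $k=0$ the local spaces of $\mathbb{N}_1^{(2)}$ (or $\mathbb{L}_1$ in 2D) and $\mathbb{BDM}_1$ are the \emph{full} affine spaces $(\mathcal{P}_1(\omega))^{k_n}$ and $(\mathcal{P}_1(\omega))^n$, so vertex values are unisolvent, the lumped form is positive definite, and finite-dimensional norm equivalence on the reference element plus a scaling argument gives uniform constants (your remark that the covariant/contravariant Piola Jacobian factors enter the $L^2$-norm and the vertex sum with the same power is the point that makes the constants mesh-independent); and (iii) the vertex rule integrates affine functions exactly on a simplex, which, combined with the elementwise constancy of the second argument, yields the two exactness identities. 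Your closing observation that the whole construction hinges on vertex-unisolvence of the lowest-order local spaces, and would fail for $k\ge 1$, correctly identifies why the multipoint reduction in \Cref{sec:a_multipoint_rotation_flux_mfe_method} is restricted to $k=0$.
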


We will apply this quadrature rule on the $L^2$-inner products on the two spaces $R_h \times Q_h$. In particular, we substitute $\langle \cdot, \cdot \rangle_h$ in the definition of $A$ to obtain the discrete operator $A_h: X_h \to X_h'$:
\begin{align}
    \langle A_h x_h, \tilde x_h \rangle :=
    \mu^{-1} \langle r_h, \tilde r_h \rangle_h
    + (2\mu + \lambda) \langle \div u_h, \div \tilde u_h \rangle_\Omega
    + \langle K^{-1} q_h, \tilde q_h \rangle_h
    + c_0 \langle p_h, \tilde p_h \rangle_\Omega.
\end{align}

In turn, we define $\cA_h := A_h + B - B^*$, which leads us to the following problem: find $\hat x_h \in X_h$ such that
\begin{align} \label{eq: reducible problem}
    \langle \cA_h \hat x_h, \tilde x_h \rangle
    &= \langle f, \tilde x_h \rangle, &
    \forall \tilde x_h &\in X_h.
\end{align}

Next, we aim to eliminate the variables $\hat r_h$ and $\hat q_h$ to obtain a multipoint mixed finite element method. For that, let us consider the matrix representation of \eqref{eq: reducible problem}.
Let $\mathsf{M_u}$ and $\mathsf{M_p}$ be the mass matrices on the spaces $U_h$ and $P_h$, respectively. Moreover, let $\mathsf{M_{r, h}}$ and $\mathsf{M_{q, h}}$ be the matrices corresponding to the quadrature rule, applied to the basis functions of $R_h$ and $Q_h$. Let $\mathsf{B_r}$, $\mathsf{B_u}$, and $\mathsf{B_q}$ be the representations of the curl on $R_h$, the divergence on $U_h$, and the divergence on $Q_h$, respectively. Finally, to shorten notation, we let $\mathsf{\hat{B}_r} = \mathsf{M_u B_r}$, $\mathsf{\hat{B}_u} = \mathsf{M_p B_u}$, and $\mathsf{\hat{B}_q} = \mathsf{M_p B_q}$ denote the action of these differential operators in their respective range spaces.

Since $\mathsf{M_{r, h}}$ and $\mathsf{M_{q, h}}$ are easily invertible, the variables $\hat r_h$ and $\hat q_h$ can now be eliminated by taking a Schur complement. We arrive at the algebraic formulation of the \emph{multipoint rotation-flux mixed finite element method (MR-MFEM)}: find $(\hat u, \hat p) \in U_h \times P_h$ such that
\begin{align} \label{eq: reduced problem algebraic}
    \begin{bmatrix}
        (2\mu + \lambda) \mathsf{B_u^T M_u B_u} + \mu \mathsf{ \hat{B}_r M_{r, h}^{-1} \hat{B}_r^T } & - \alpha \mathsf{\hat{B}_u^T} \\
        \alpha \mathsf{\hat{B}_u} & c_0 \mathsf{M_p} + \delta^2 K \mathsf{ \hat{B}_q M_{q, h}^{-1} \hat{B}_q^T }
    \end{bmatrix}
    \begin{bmatrix}
        \mathsf{\hat u} \\ \mathsf{\hat p}
    \end{bmatrix}
    =
    \begin{bmatrix}
        \mathsf{f_u} - \mu \mathsf{ \hat{B}_r M_{r, h}^{-1} f_r } \\
        \mathsf{f_p} - \delta K \mathsf{ \hat{B}_q M_{q, h}^{-1} f_q }
    \end{bmatrix},
\end{align}
with $\mathsf{\hat u}$,  $\mathsf{\hat p}$ the vector representations of $\hat u_h$ and $\hat p_h$, respectively, and $\mathsf{f} = [\mathsf{f_r}, \mathsf{f_q}, \mathsf{f_u}, \mathsf{f_p}]^T$ representing the right-hand side $f \in X_h'$ in \eqref{eq: Biot weak disc}.

We will refer to \eqref{eq: reducible problem} as the \emph{reducible problem} and the equivalent \eqref{eq: reduced problem algebraic} as the \emph{reduced problem}.
We remark that \eqref{eq: reduced problem algebraic} is a discretization of the following system, in which we recognize the $(1,1)$-block as the weighted vector Laplacian from \eqref{eq: momentum balance}:
\begin{align} \label{eq: reduced problem}
    \begin{bmatrix}
        - \grad (2\mu + \lambda) \div + \curl \mu \curl & \grad \alpha \\
        \alpha \div & c_0 - \div \delta^2 K \grad
    \end{bmatrix}
    \begin{bmatrix}
        \hat u \\ \hat p
    \end{bmatrix}.
\end{align}

\begin{remark}
    The more general case in which the conductivity $K$ is an elementwise constant, full tensor can be handled by defining a new lumped mass matrix $\mathsf{M_{q, h}^K}$ such that $\mathsf{q^T M_{q, h}^K \tilde q} = \langle K^{-1} q, \tilde q \rangle_h$ for all $q, \tilde q \in Q_h$. However, we restrict our analysis to constant, scalar $K$ and will only consider the more general case in the numerical examples of \Cref{sec: Numerical results}.
\end{remark}

\subsection{Analysis}
\label{sub:analysis_reducible}

We devote this subsection to the theoretical results concerning the MR-MFEM \eqref{eq: reduced problem algebraic}. After presenting general findings, we present specific results for the two-dimensional case in \Cref{ssub: two-dimensional case}.

\begin{lemma}[Well-posedness] \label{lem: well-posed MR-MFEM}
    The operator $\cA_h$ satisfies
    \begin{align}
        \| x_h \|_X
        &\eqsim
        \sup_{\tilde x_h \in X_h} \frac{\langle \cA_h x_h, \tilde x_h \rangle}{\| \tilde x_h \|_X},
        & \forall x_h &\in X_h.
    \end{align}
    In turn, the reducible problem \eqref{eq: reducible problem} admits a unique and bounded solution $\hat x_h \in X_h$.
\end{lemma}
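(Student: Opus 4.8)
The plan is to mirror the continuous analysis of Section~\ref{sec:analysis}, since the only change from $\cA$ to $\cA_h$ is the replacement of the $L^2$-inner products on $R_h$ and $Q_h$ by the lumped inner products $\langle \cdot, \cdot \rangle_h$. The key enabling fact is \Cref{lem: properties quad}: the lumped norm $\| \cdot \|_h$ is equivalent to the $L^2$-norm on $R_h$ and $Q_h$, uniformly in the mesh. Hence replacing $\langle r_h, \tilde r_h \rangle_\Omega$ by $\langle r_h, \tilde r_h \rangle_h$ and $\langle K^{-1} q_h, \tilde q_h \rangle_\Omega$ by $\langle K^{-1} q_h, \tilde q_h \rangle_h$ changes the norms only by constants absorbed into $\lesssim$/$\gtrsim$. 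The off-diagonal operator $B$ (built from $\curl$, $\div$ and the multiplications by $\alpha$, $\delta$) is untouched, so the skew-symmetric part $B - B^*$ is identical to the continuous/conforming case.

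First I would establish continuity: for $x_h, \tilde x_h \in X_h$ we have
$\langle A_h x_h, \tilde x_h \rangle = \mu^{-1} \langle r_h, \tilde r_h \rangle_h + (2\mu+\lambda) \langle \div u_h, \div \tilde u_h \rangle_\Omega + K^{-1} \langle q_h, \tilde q_h \rangle_h + c_0 \langle p_h, \tilde p_h \rangle_\Omega$, and by Cauchy--Schwarz in $\langle \cdot, \cdot \rangle_h$ together with $\| \cdot \|_h \eqsim \| \cdot \|$ each term is bounded by the corresponding products of $\| \cdot \|_X$-components, exactly as in \Cref{lem: continuity cont}. Combined with the continuity of $B$ from \Cref{lem: continuity cont} (which applies verbatim since $B$ is unchanged and $X_h \subset X$), this gives $\langle \cA_h x_h, \tilde x_h \rangle \lesssim \| x_h \|_X \| \tilde x_h \|_X$.

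Next I would prove the inf-sup bound by reusing the four-part test-function construction from the proof of \Cref{lem: inf-sup disc} / \Cref{lem: infsup cont}, with $\Pi$ replaced by $\Pi_h$ (the $L^2$-projection onto $\Ran(\curl, R_h)$), and with the discrete energy norm $\enorm{\cdot}_h$ from \eqref{eq: enorm h}. The diagonal test function $\tilde x_{h,0} := \hat x_h$ now yields $\langle \cA_h \hat x_h, \tilde x_{h,0} \rangle = \mu^{-1} \| r_h \|_h^2 + (2\mu+\lambda)\|\div u_h\|^2 + K^{-1}\|q_h\|_h^2 + c_0 \|p_h\|^2 \gtrsim \mu^{-1}\|r_h\|^2 + (2\mu+\lambda)\|\div u_h\|^2 + K^{-1}\|q_h\|^2 + c_0\|p_h\|^2$ by the norm equivalence. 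The remaining test functions $\tilde x_{h,1}, \tilde x_{h,2}, \tilde x_{h,3}$ are constructed exactly as before using assumptions \ref{ass: infsup Poisson}--\ref{ass: infsup curl}; the cross terms where they meet $A_h$ involve $\langle r_h, \tilde r_{u} \rangle_h$ and $\langle q_h, \tilde q_p \rangle_h$ in place of the $L^2$-pairings, which are again controlled by Cauchy--Schwarz and $\| \cdot \|_h \eqsim \| \cdot \|$ before applying Young's inequality. The resulting lower bound is $\gtrsim \enorm{\hat x_h}_h^2 \eqsim \| \hat x_h \|_X^2$, and $\enorm{\tilde x_h}_h \lesssim \enorm{\hat x_h}_h$ as before. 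The reverse inequality $\sup_{\tilde x_h} \langle \cA_h x_h, \tilde x_h \rangle / \| \tilde x_h \|_X \lesssim \| x_h \|_X$ is just continuity, giving the claimed $\eqsim$.

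Finally, well-posedness and the bound on $\hat x_h$ follow from the finite-dimensional Babu\v{s}ka--Lax--Milgram argument of \Cref{thm: well-posed cont} and \Cref{thm: stability}: the skew part of $\cA_h$ still gives $\langle \cA_h(r_h,-u_h,-q_h,p_h),(\tilde r_h,-\tilde u_h,-\tilde q_h,\tilde p_h)\rangle = \langle \cA_h \tilde x_h, x_h \rangle$ (since $A_h$ is symmetric and $B - B^*$ transforms the same way), so the two-sided inf-sup condition on a finite-dimensional space yields invertibility of $\cA_h$ and hence existence, uniqueness, and $\| \hat x_h \|_X \lesssim \| f \|_{X_h'}$. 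I do not anticipate a genuine obstacle here; the only point requiring care is bookkeeping the constants in the norm equivalence $\| \cdot \|_h \eqsim \| \cdot \|$ through every cross term so that the final $\gtrsim$ remains parameter-robust, but since those constants depend only on shape-regularity (by \Cref{lem: properties quad}), they are absorbed into $\lesssim$ by convention.
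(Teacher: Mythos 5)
Your proposal is correct and follows exactly the route the paper takes: the paper's proof is a one-line appeal to repeating the arguments of \Cref{thm: stability} (i.e.\ the continuity and inf-sup constructions of \Cref{lem: continuity cont,lem: infsup cont,lem: inf-sup disc}) while invoking the norm equivalence $\| \cdot \|_h \eqsim \| \cdot \|$ from \Cref{lem: properties quad} wherever a lumped inner product replaces an $L^2$ one. Your write-up simply makes explicit the bookkeeping that the paper leaves implicit, with no substantive difference in strategy.
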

\begin{proof}
    The same arguments as in \Cref{thm: stability} are followed, using the equivalence $\| \cdot \| \eqsim \| \cdot \|_h$ when necessary.
\end{proof}

As in \Cref{lem: properties quad}, we let $\mathring{R}_h := \mathbb{P}_0^{k_n}$ with $k_n := \left(\begin{smallmatrix} n \\ 2 \end{smallmatrix}\right)$ and $\mathring{Q}_h := \mathbb{P}_0^n$ be the spaces containing elementwise constant (vector) functions. Additionally, let $\mathring{\Pi}_R$ and $\mathring{\Pi}_Q$ be their respective $L^2$ projections. These have the following approximation properties for sufficiently regular $(r, q) \in R \times Q$:
\begin{align} \label{eq: pw constant approx}
    \| (I - \mathring{\Pi}_R) r \| &\lesssim h \| r \|_1, &
    \| (I - \mathring{\Pi}_Q) q \| &\lesssim h \| q \|_1.
\end{align}

\begin{lemma}[Convergence] \label{lem: Convergence MR-MFEM}
    The solution $\hat x_h$ to \eqref{eq: reducible problem} converges linearly to $x$, the solution to \eqref{eq: Biot weak cont}. I.e. a constant $C$ exists, depending on the physical parameters and the regularity of $x$, such that
    \begin{align}
        \| \hat x_h - x \|_X \le Ch.
    \end{align}
\end{lemma}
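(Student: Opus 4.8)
The plan is to apply a variant of Strang's second lemma, since the reducible problem \eqref{eq: reducible problem} is a consistent discretization only up to the quadrature error introduced by replacing $\langle\cdot,\cdot\rangle_\Omega$ with $\langle\cdot,\cdot\rangle_h$ on $R_h$ and $Q_h$. Concretely, I would start from the well-posedness of $\cA_h$ established in \Cref{lem: well-posed MR-MFEM}: for $y_h := \hat x_h - \Pi_X x \in X_h$ there is a $\tilde y_h \in X_h$ with $\langle \cA_h y_h, \tilde y_h \rangle \gtrsim \|y_h\|_X^2$ and $\|\tilde y_h\|_X \lesssim \|y_h\|_X$. Writing $\langle \cA_h y_h, \tilde y_h \rangle = \langle \cA_h \hat x_h, \tilde y_h\rangle - \langle \cA_h \Pi_X x, \tilde y_h\rangle$ and using \eqref{eq: reducible problem} together with the continuous equation \eqref{eq: Biot weak cont} tested against $\tilde y_h \in X_h \subset X$, the first term equals $\langle f, \tilde y_h\rangle = \langle \cA x, \tilde y_h\rangle$. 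Hence
\begin{align*}
    \|y_h\|_X^2 \lesssim \langle \cA x - \cA_h \Pi_X x, \tilde y_h \rangle
    = \langle \cA(x - \Pi_X x), \tilde y_h \rangle + \langle (\cA - \cA_h)\Pi_X x, \tilde y_h \rangle.
\end{align*}
The first summand is the usual consistency term, bounded by $\|(I-\Pi_X)x\|_X\,\|\tilde y_h\|_X$ via \Cref{lem: continuity cont}; the second is the new quadrature-consistency term.

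The heart of the argument is therefore estimating $\langle (\cA - \cA_h)\Pi_X x, \tilde y_h\rangle$, which only involves the two lumped inner products and equals
\begin{align*}
    \mu^{-1}\big(\langle \Pi_R r, \tilde r_h\rangle_h - \langle \Pi_R r, \tilde r_h\rangle_\Omega\big)
    + K^{-1}\big(\langle \Pi_Q q, \tilde q_h\rangle_h - \langle \Pi_Q q, \tilde q_h\rangle_\Omega\big).
\end{align*}
Here I would use the exactness part of \Cref{lem: properties quad}: since the quadrature is exact whenever one argument is elementwise constant, I can subtract $\mathring{\Pi}_R$, respectively $\mathring{\Pi}_Q$, at no cost, e.g. $\langle \Pi_R r, \tilde r_h\rangle_h - \langle \Pi_R r, \tilde r_h\rangle_\Omega = \big(\langle (I-\mathring\Pi_R)\Pi_R r, \tilde r_h\rangle_h - \langle (I-\mathring\Pi_R)\Pi_R r, \tilde r_h\rangle_\Omega\big)$. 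Then by Cauchy–Schwarz in each of the two (equivalent, by \Cref{lem: properties quad}) inner products,
\begin{align*}
    \big|\langle \Pi_R r, \tilde r_h\rangle_h - \langle \Pi_R r, \tilde r_h\rangle_\Omega\big|
    \lesssim \|(I-\mathring\Pi_R)\Pi_R r\|\,\|\tilde r_h\|,
\end{align*}
and a triangle inequality $\|(I-\mathring\Pi_R)\Pi_R r\| \le \|(I-\mathring\Pi_R) r\| + \|(I - \mathring\Pi_R)(I-\Pi_R) r\| \le \|(I-\mathring\Pi_R) r\| + \|(I-\Pi_R) r\|$ reduces everything to the approximation bounds \eqref{eq: pw constant approx} and the interpolation estimates listed before \Cref{thm: Error estimate}, all of which are $O(h)\|r\|_1$ (and $O(h)\|q\|_1$). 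Multiplying by $\mu^{-1}$ and $K^{-1}$ and by $\|\tilde r_h\| \lesssim \mu^{1/2}\|\tilde y_h\|_X$ (and likewise $\|\tilde q_h\| \lesssim K^{1/2}\|\tilde y_h\|_X$) absorbs these weights, so the quadrature term is $\lesssim h\,\|\tilde y_h\|_X$ with a constant depending on $\mu^{-1/2}$, $K^{-1/2}$ and the $H^1$-norms of $r,q$.

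Combining the two summands, dividing by $\|y_h\|_X \eqsim \|\tilde y_h\|_X$, and adding the interpolation error via the triangle inequality $\|\hat x_h - x\|_X \le \|y_h\|_X + \|(I-\Pi_X)x\|_X$ yields $\|\hat x_h - x\|_X \lesssim h$, where the hidden constant depends on the physical parameters and on $\|x\|$ in a sufficiently strong Sobolev norm (here $H^1$ suffices for the lowest-order spaces). The main obstacle is the quadrature-consistency term: one must be careful that the constant it contributes does not blow up with the material parameters, which is exactly why the exactness-against-constants property of the lumping rule is essential — without it one would only get an $O(1)$ bound. The remainder is routine once that term is controlled.
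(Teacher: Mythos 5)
Your proof is correct, but it follows a genuinely different route from the paper. The paper proves the lemma by comparing $\hat x_h$ to the \emph{conforming discrete solution} $x_h$ of \eqref{eq: Biot weak disc}: since $\langle \cA_h \hat x_h, \tilde x_h\rangle = \langle \cA x_h, \tilde x_h\rangle = \langle f, \tilde x_h\rangle$, the inf-sup condition of \Cref{lem: well-posed MR-MFEM} reduces $\|\hat x_h - x_h\|_X$ to the quadrature defect $\langle (A - A_h)x_h, \tilde x_h\rangle$ evaluated at $x_h$, which is then bounded via $\|r_h - \mathring{\Pi}_R r\| \le \|r_h - r\| + \|(I-\mathring{\Pi}_R)r\|$ and a final appeal to \Cref{thm: Error estimate} with $k=0$. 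You instead compare $\hat x_h$ to the \emph{interpolant} $\Pi_X x$ in a single Strang-second-lemma estimate, so the quadrature defect is evaluated at $\Pi_X x$ and controlled by $\|(I-\mathring{\Pi}_R)\Pi_R r\| \le \|(I-\mathring{\Pi}_R)r\| + \|(I-\Pi_R)r\|$; the auxiliary solution $x_h$ never appears. Both arguments hinge on the same two ingredients — the discrete inf-sup for $\cA_h$ and the exactness of the lumping rule against elementwise constants — and both track the parameter weights identically. The paper's detour through $x_h$ buys the additional (and separately useful) statement $\|\hat x_h - x_h\|_X \lesssim h$, which underpins the later invariance discussion, whereas your version is more self-contained in that it does not presuppose \Cref{thm: Error estimate}, only the interpolation estimates. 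Two cosmetic remarks: you only need $\|\tilde y_h\|_X \lesssim \|y_h\|_X$, not the equivalence $\|y_h\|_X \eqsim \|\tilde y_h\|_X$ you invoke when dividing; and for the second family the listed estimate for $\Pi_R$ is $h^2\|r\|_2$ rather than $h\|r\|_1$, so the regularity you ultimately cite should match whichever interpolation bound you use — harmless here since the lemma permits the constant to depend on the regularity of $x$.
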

\begin{proof}
    We follow \cite[Thm. 3.2]{lee2018local} by first showing that the solutions $\hat x_h$ and $x_h$ converge linearly to each other and then using \Cref{thm: Error estimate} to obtain the result. We start by considering the norm of the difference and use \Cref{lem: well-posed MR-MFEM} with the fact that $\mathcal{A} x_h = \mathcal{A}_h \hat x_h$ to derive:
    \begin{align*}
        \| \hat x_h - x_h \|_X
        \eqsim
        \sup_{\tilde x_h \in X_h} \frac{\langle \cA_h(\hat x_h - x_h), \tilde x_h \rangle}{\| \tilde x_h \|_X}
        &=
        \sup_{\tilde x_h \in X_h} \frac{\langle (\cA - \cA_h) x_h, \tilde x_h \rangle}{\| \tilde x_h \|_X} \\
        &=
        \sup_{\tilde x_h \in X_h} \frac{\langle (A - A_h) x_h, \tilde x_h \rangle}{\| \tilde x_h \|_X}
    \end{align*}

    We continue by bounding the numerator, which consists of the following terms:
    \begin{align*}
        \langle (A - A_h) x_h, \tilde x_h \rangle
        &= \mu^{-1} (\langle r_h, \tilde r_h \rangle_\Omega
                - \langle r_h, \tilde r_h \rangle_h)
        + K^{-1} (\langle q_h, \tilde q_h \rangle_\Omega
                - \langle q_h, \tilde q_h \rangle_h)
    \end{align*}
    For the first term, we derive the upper bound
    \begin{align*}
        \mu^{-1} (\langle r_h, \tilde r_h \rangle_\Omega
                    - \langle r_h, \tilde r_h \rangle_h)
        &= \mu^{-1} (\langle r_h - \mathring{\Pi}_R r, \tilde r_h \rangle_\Omega
                    + \langle \mathring{\Pi}_R r - r_h, \tilde r_h \rangle_h) \\
        &\le \mu^{-1} (\| r_h - \mathring{\Pi}_R r \| \| \tilde r_h \| + \| \mathring{\Pi}_R r - r_h \|_h \| \tilde r_h \|_h) \\
        &\eqsim \mu^{-1} \| r_h - \mathring{\Pi}_R r \| \| \tilde r_h \|
    \end{align*}
    The second term is bounded analogously and we obtain
    \begin{align*}
        \langle (A - A_h) x_h, &\tilde x_h \rangle
        \lesssim \mu^{-1} \| r_h - \mathring{\Pi}_R r \| \| \tilde r_h \|
        + K^{-1} \| q_h - \mathring{\Pi}_Q q \| \| \tilde q_h \| \\
        &\lesssim (\mu^{-1} \| r_h - \mathring{\Pi}_R r \|^2 + K^{-1} \| q_h - \mathring{\Pi}_Q q \|^2)^{\frac12}
        (\mu^{-1} \| \tilde r_h \|^2 + K^{-1} \| \tilde q_h \|^2)^{\frac12} \\
        &\le (\mu^{-1} \| r_h - \mathring{\Pi}_R r \|^2 + K^{-1} \| q_h - \mathring{\Pi}_Q q \|^2)^{\frac12}
        \| \tilde x_h \|_X
    \end{align*}

    Combining with the above, we have thus derived
    \begin{align*}
        \| \hat x_h - x_h \|_X &\lesssim (\mu^{-1} \| r_h - \mathring{\Pi}_R r \|^2 + K^{-1} \| q_h - \mathring{\Pi}_Q q \|^2)^{\frac12} \\
        &\lesssim \sqrt{\mu^{-1}} (\| r_h - r \| + \| r - \mathring{\Pi}_R r \|)
        + \sqrt{K^{-1}} (\| q_h - q \| + \| q - \mathring{\Pi}_Q q \|) \\
        &\lesssim \| x_h - x \|_X
        + \sqrt{\mu^{-1}} \| r - \mathring{\Pi}_R r \|
        + \sqrt{K^{-1}} \| q - \mathring{\Pi}_Q q \|.
    \end{align*}
    Finally, a triangle inequality, property \eqref{eq: pw constant approx}, and \Cref{thm: Error estimate} with $k = 0$ give us
    \begin{align*}
        \| \hat x_h - x \|_X
        &\le \| \hat x_h - x_h \|_X + \| x_h - x \|_X
        \le C h.
    \end{align*}
\end{proof}

The introduction of the quadrature rule leaves {components} of the solution unchanged. We present these invariants formally in the following lemma and subsequent corollaries. The proofs are analogous to \cite{boon2022multipoint}, but are included here for the sake of completeness.

\begin{lemma} \label{lem: invariant curl}
    The {application of the} quadrature rule does not affect the curl of the rotation, i.e. $\curl \hat r_h = \curl r_h$.
\end{lemma}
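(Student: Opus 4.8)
The plan is to compare the solutions $\hat x_h$ and $x_h$ of the reducible problem \eqref{eq: reducible problem} and the conforming problem \eqref{eq: Biot weak disc}, and to show that their rotation components have identical curl. The starting point is the difference equation $\cA_h \hat x_h = \cA x_h$ (which holds because both equal $f$ as functionals on $X_h$), equivalently $(A - A_h) x_h = \cA_h(\hat x_h - x_h) = \cA(\hat x_h - x_h) + (A_h - A)(\hat x_h - x_h)$. A cleaner route: test the relation $\cA_h \hat x_h - \cA x_h = 0$ against a judiciously chosen $\tilde x_h \in X_h$ that isolates the curl term, exploiting that the only difference between $\cA_h$ and $\cA$ lives in the $R_h$- and $Q_h$-mass blocks, and that the quadrature rule is \emph{exact against elementwise-constant test functions} by \Cref{lem: properties quad}.

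First I would test the difference equation with $\tilde x_h = (\tilde r_h, 0, 0, 0)$ for arbitrary $\tilde r_h \in R_h$. Because the $B$ and $B^*$ contributions involving $\tilde r_h$ only see $\curl \tilde r_h$ paired with $u$-type components, and these are common to $\cA$ and $\cA_h$, the only surviving discrepancy is $\mu^{-1}\big(\langle \hat r_h, \tilde r_h\rangle_h - \langle r_h, \tilde r_h\rangle_\Omega\big)$ together with the $u$-coupling $-\langle \hat u_h - u_h, \curl \tilde r_h\rangle_\Omega$ — so I get a relation of the form $\mu^{-1}\langle \hat r_h, \tilde r_h\rangle_h = \mu^{-1}\langle r_h,\tilde r_h\rangle_\Omega + \langle \text{(flux/displacement difference)}, \curl\tilde r_h\rangle_\Omega$. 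Next, restrict $\tilde r_h$ to range over test functions with $\curl \tilde r_h = 0$ to conclude $\langle \hat r_h, \tilde r_h\rangle_h = \langle r_h, \tilde r_h\rangle_\Omega$ on the curl-free subspace, which pins down the curl-free part of $\hat r_h$ in the lumped inner product. The real work is the complementary statement: I must show that $\curl \hat r_h$ and $\curl r_h$ coincide, which I extract by testing instead with $\tilde x_h = (0, \tilde u_h, 0, 0)$ where $\tilde u_h \in \Ran(\curl, R_h) = \Ker(\div, U_h)$, so that the divergence-coupling terms drop and I am left with $\langle \curl(\hat r_h - r_h), \tilde u_h\rangle_\Omega = 0$ for all such $\tilde u_h$; since $\curl(\hat r_h - r_h) \in \Ran(\curl, R_h)$ by construction and $\Ran(\curl, R_h) = \Ker(\div,U_h)$ by assumption \ref{ass: infsup curl}, choosing $\tilde u_h = \curl(\hat r_h - r_h)$ forces $\curl(\hat r_h - r_h) = 0$.

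The \textbf{main obstacle} is bookkeeping the cross-terms correctly: when I test with $(0,\tilde u_h,0,0)$ I must be certain that the only term involving $\hat r_h - r_h$ that survives is $\langle \curl(\hat r_h - r_h), \tilde u_h\rangle_\Omega$, i.e. that the mass-matrix perturbation does not leak into the momentum equation. This is guaranteed because the quadrature rule \eqref{eq: quadrature rule} is applied \emph{only} to the $\langle r_h, \tilde r_h\rangle$ and $\langle q_h, \tilde q_h\rangle$ blocks, and the operator $B$ (hence $B^*$) is left untouched; so the momentum-balance row of $\cA_h$ differs from that of $\cA$ in no term that pairs against $\tilde u_h$. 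A secondary point requiring care is that $\curl(\hat r_h - r_h)$ genuinely lies in $\Ran(\curl, R_h)$ — this is immediate since $\hat r_h, r_h \in R_h$ — so that the test function $\tilde u_h = \curl(\hat r_h - r_h)$ is admissible in $U_h$ via \ref{ass: infsup curl}. With these two observations the identity $\curl \hat r_h = \curl r_h$ follows, and the remaining corollaries (on $\div u$, etc.) will follow by analogous test-function choices, mirroring the argument in \cite{boon2022multipoint}.
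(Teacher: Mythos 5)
Your proposal is correct and its decisive step is exactly the paper's argument: test the difference equation with $(0, u_r, 0, 0)$ where $u_r = \curl(\hat r_h - r_h) \in \Ran(\curl, R_h) = \Ker(\div, U_h)$, so that $\div u_r = 0$ kills all remaining couplings and leaves $\|\curl(\hat r_h - r_h)\|^2 = 0$. The preliminary test with $(\tilde r_h, 0, 0, 0)$ is superfluous for this lemma and can be dropped.
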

\begin{proof}
    Let $u_r = \curl (\hat r_h - r_h)$ and consider the test function $\tilde x = (0, u_r, 0, 0)$. Using the fact that $\div u_r = 0$, we derive
    \begin{align*}
        0 &= \langle \cA_h \hat x_h - \cA x_h, \tilde x \rangle \\
        &= \langle \curl (\hat r_h - r_h), u_r \rangle_\Omega
        + (2\mu + \lambda) \langle \div (\hat u_h - u_h), \div u_r \rangle_\Omega
        - \langle \hat p_h - p_h, \div \alpha u_r \rangle_\Omega \\
        &= \| \curl (\hat r_h - r_h) \|^2.
    \end{align*}
\end{proof}

\begin{corollary} \label{cor: invariant div u}
    In the decoupled case of $\alpha = 0$, the volumetric strain is not affected by the quadrature rule. I.e. $\div \hat u_h = \div u_h$.
\end{corollary}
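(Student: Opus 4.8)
The plan is to mimic the proof of \Cref{lem: invariant curl}: select a test function supported in the displacement slot so that the resulting identity isolates $\| \div(\hat u_h - u_h) \|^2$, and then discard the rotation contribution by invoking \Cref{lem: invariant curl}. Since $\hat x_h$ solves the reducible problem \eqref{eq: reducible problem} while $x_h$ solves \eqref{eq: Biot weak disc}, we have $\langle \cA_h \hat x_h - \cA x_h, \tilde x_h \rangle = 0$ for all $\tilde x_h \in X_h$. I would test against $\tilde x = (0, \hat u_h - u_h, 0, 0)$.

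The key computation is then a short bookkeeping of which terms survive on this test function. Because the quadrature rule only modifies the $R_h$ and $Q_h$ inner products, $A_h$ and $A$ coincide on the displacement slot, producing the common term $(2\mu + \lambda)\langle \div \hat u_h, \div(\hat u_h - u_h)\rangle_\Omega$ versus $(2\mu + \lambda)\langle \div u_h, \div(\hat u_h - u_h)\rangle_\Omega$. The operator $B$ contributes $\langle \curl \hat r_h, \hat u_h - u_h\rangle_\Omega$ (the pressure component of $\tilde x$ being zero), and the $B^\ast$ contribution equals $\langle B(0, \hat u_h - u_h, 0, 0), \hat x_h\rangle = \alpha \langle \div(\hat u_h - u_h), \hat p_h\rangle_\Omega$, which vanishes precisely because $\alpha = 0$. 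Collecting these,
\begin{align*}
    (2\mu + \lambda)\| \div(\hat u_h - u_h) \|^2
    + \langle \curl(\hat r_h - r_h), \hat u_h - u_h\rangle_\Omega = 0 .
\end{align*}

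To finish, \Cref{lem: invariant curl} gives $\curl \hat r_h = \curl r_h$, so the second term is identically zero, and since $2\mu + \lambda > 0$ we conclude $\| \div(\hat u_h - u_h)\| = 0$, i.e.\ $\div \hat u_h = \div u_h$. I do not anticipate a genuine obstacle: the argument is essentially the same as for \Cref{lem: invariant curl}, and the only point deserving care is verifying that the $B^\ast$ term is the sole one that could couple in the pressure and that it is annihilated by the hypothesis $\alpha = 0$ — which is exactly why the statement is restricted to the decoupled case.
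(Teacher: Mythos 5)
Your proposal is correct and follows essentially the same route as the paper: the paper's proof also tests $\cA_h \hat x_h - \cA x_h$ against $\tilde x = (0, \hat u_h - u_h, 0, 0)$ and invokes \Cref{lem: invariant curl} together with $\alpha = 0$ to reduce the identity to $(2\mu+\lambda)\|\div(\hat u_h - u_h)\|^2 = 0$. Your version merely spells out the term-by-term bookkeeping (including the $B^*$ term, where the coupling to pressure should involve $\hat p_h - p_h$, though this is immaterial since $\alpha = 0$ annihilates it) that the paper leaves implicit.
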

\begin{proof}
    Let us consider $\tilde x = (0, \hat u_h - u_h, 0, 0)$. Using $\curl (\hat r_h - r_h) = 0$ from \Cref{lem: invariant curl} and $\alpha = 0$, we derive:
    \begin{align*}
        0 &= \langle \cA_h \hat x_h - \cA x_h, \tilde x \rangle
        = (2\mu + \lambda) \| \div (\hat u_h - u_h) \|^2.
    \end{align*}
\end{proof}


\subsubsection{The two-dimensional case}
\label{ssub: two-dimensional case}

In the special case with $n = 2$, we obtain stronger results concerning the rotation variable, which we present in the following corollary and lemma, respectively.

\begin{corollary} \label{cor: invariant rot 2D}
    In 2D, the rotation variable is entirely unaffected by the quadrature rule: $\hat r_h = r_h$.
\end{corollary}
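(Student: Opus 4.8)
The plan is to show that the difference $c := \hat r_h - r_h \in R_h$ is identically zero, combining three facts already available: the curl-invariance of \Cref{lem: invariant curl}, the $H^1$-conformity of the two-dimensional rotation space $R_h = \mathbb{L}_1$, and the exactness of the lumped inner product on elementwise constants from \Cref{lem: properties quad}.

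First I would argue that $c$ is a global constant. By \Cref{lem: invariant curl} we have $\curl c = \curl \hat r_h - \curl r_h = 0$, and since in $2$D the rotation space is scalar-valued ($k_n = \binom{2}{2} = 1$) with $\curl c = (-\partial_2 c, \partial_1 c)$, this forces $\grad c = 0$ on every element. As $R_h$ consists of continuous functions and $\Omega$ is contractible, hence connected, $c$ is constant on all of $\Omega$. In particular $c \in \mathring R_h = \mathbb{P}_0^{k_n}$ and, being a difference of two elements of $R_h$, $c$ is an admissible test function in $R_h$.

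Next I would subtract the rotation-test-function (first block) rows of the reducible problem \eqref{eq: reducible problem} and of the conforming problem \eqref{eq: Biot weak disc}: for every $\tilde r_h \in R_h$,
\begin{align*}
    \mu^{-1} \langle \hat r_h, \tilde r_h \rangle_h - \langle \hat u_h, \curl \tilde r_h \rangle_\Omega &= \langle u_0, \nu \times \tilde r_h \rangle_{\partial_u \Omega} \\ &= \mu^{-1} \langle r_h, \tilde r_h \rangle_\Omega - \langle u_h, \curl \tilde r_h \rangle_\Omega .
\end{align*}
Choosing $\tilde r_h = c$ annihilates both curl terms because $\curl c = 0$, leaving $\langle \hat r_h, c \rangle_h = \langle r_h, c \rangle_\Omega$. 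Since $c \in \mathring R_h$, \Cref{lem: properties quad} replaces the lumped product on the left by the exact one, $\langle \hat r_h, c \rangle_h = \langle \hat r_h, c \rangle_\Omega$, whence $\langle c, c \rangle_\Omega = \langle \hat r_h - r_h, c \rangle_\Omega = 0$ and therefore $c = 0$.

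The only point that requires care is the characterization of $\Ker(\curl, R_h)$: one must invoke the continuity of $R_h$ together with connectedness of $\Omega$ to pass from ``elementwise constant'' to ``globally constant'', and then check that this constant lies in $\mathring R_h$ so that \Cref{lem: properties quad} can be applied to it. Both hold in $2$D. I expect no further obstacle; this is precisely where the three-dimensional situation differs, since there $\Ker(\curl, R_h)$ with $R_h = \mathbb{N}_1^{(2)}$ contains nonconstant gradient fields, so only the weaker \Cref{lem: invariant curl} survives.
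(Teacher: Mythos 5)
Your proposal is correct and follows essentially the same route as the paper: both use \Cref{lem: invariant curl} to conclude that $\hat r_h - r_h$ is a global constant in 2D, then test the difference of the two discrete systems with that constant in the rotation slot and invoke the exactness of the quadrature rule on $\mathring R_h \supseteq \mathbb{R}$ from \Cref{lem: properties quad} to obtain $\|\hat r_h - r_h\|^2 = 0$. The only difference is cosmetic (the paper scales the test function by $\mu$ and works with the full operator $\cA_h \hat x_h - \cA x_h$, while you isolate the first block row), and you spell out the continuity-plus-connectedness step that the paper leaves implicit.
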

\begin{proof}
    In 2D, the curl is given by the rotated gradient $\curl u = (- \partial_2 u, \partial_1 u)$. In turn, \Cref{lem: invariant curl} implies that $\hat r_h - r_h$ is a constant. For $\tilde x := (\mu(\hat r_h - r_h), 0, 0, 0)$, we derive using $\mathbb{R} \subseteq \mathring{R}_h$ and \Cref{lem: properties quad}:
    \begin{align*}
        0 = \langle \cA_h \hat x_h - \cA x_h, \tilde x \rangle
        &= \langle \hat r_h, \hat r_h - r_h \rangle_h - \langle r_h, \hat r_h - r_h \rangle_\Omega \\
        &= \langle \hat r_h, \hat r_h - r_h \rangle_\Omega - \langle r_h, \hat r_h - r_h \rangle_\Omega
        = \| \hat r_h - r_h \|^2.
    \end{align*}
\end{proof}

\begin{lemma}[Improved estimate] \label{lem: improved estimate}
    If $n = 2$, then we obtain second order convergence in the rotation variable:
    \begin{align}
        \| \hat r_h - r \| =
        \| r_h - r \| \lesssim
        h^2 \| r \|_2
    \end{align}
\end{lemma}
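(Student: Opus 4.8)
The equality $\|\hat r_h - r\| = \|r_h - r\|$ is precisely \Cref{cor: invariant rot 2D}, so only the error of the four-field rotation $r_h$ needs to be estimated, and the task is to improve the first-order rate that \Cref{thm: Error estimate} (with $k = 0$) yields for this quantity to second order. The plan is to observe that, in two dimensions, $r_h$ is exactly a Ritz (Galerkin) projection of $r$, so that the statement reduces to the classical $L^2$-superconvergence estimate for continuous piecewise-linear elements.

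First I would record the Galerkin orthogonality $\langle \cA (x_h - x), \tilde x_h \rangle = 0$ for all $\tilde x_h \in X_h$, obtained by subtracting \eqref{eq: Biot weak disc} from \eqref{eq: Biot weak cont}. Testing the momentum component with $\tilde u_h = \curl \tilde r_h$ --- which by \ref{ass: infsup curl} lies in $\Ker(\div, U_h) \subseteq U_h$, so that $\div \tilde u_h = 0$ --- makes all but one term drop and leaves
\begin{align*}
    \langle \curl (r_h - r), \curl \tilde r_h \rangle_\Omega = 0, \qquad \forall\, \tilde r_h \in R_h.
\end{align*}
Testing the rotation component with any $\tilde r_h \in \Ker(\curl, R_h)$, so that $\curl \tilde r_h = 0$, gives $\langle r_h - r, \tilde r_h \rangle_\Omega = 0$. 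Together these identities say that $r_h$ is the Galerkin approximation in $R_h$ of $r$ for the bilinear form $a(\cdot, \cdot) := \langle \curl\, \cdot, \curl\, \cdot \rangle_\Omega$ on $R$, with the usual normalization on the finite-dimensional kernel $\Ker(\curl, R)$ (the constants, or $\{0\}$ when $\partial_r \Omega \neq \emptyset$). In particular the map $r \mapsto r_h$ involves no material parameter, which is what makes the estimate below parameter-robust.

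Since $R_h = \mathbb{L}_1$ and, for a scalar field, $\curl$ is the rotated gradient, the form $a$ is the $H^1$-seminorm inner product; Céa's lemma and the $\mathbb{L}_1$-interpolation estimate then give the energy bound $\| \curl (r_h - r) \| \lesssim h\, |r|_{H^2}$, in agreement with \Cref{thm: Error estimate}. For the $L^2$-estimate I would run the Aubin--Nitsche duality argument for the scalar Poisson problem $-\Delta z = r_h - r$, equipped with the homogeneous boundary conditions induced by the decomposition $\partial \Omega = \partial_r \Omega \cup \partial_u \Omega$: using $H^2$-regularity of this dual problem on a sufficiently regular domain and approximating $z$ by its $\mathbb{L}_1$-interpolant yields $\| r_h - r \| \lesssim h\, \| \curl (r_h - r) \| \lesssim h^2\, |r|_{H^2}$. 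Combined with \Cref{cor: invariant rot 2D}, this is the claim.

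The only ingredient beyond routine bookkeeping is the $H^2$-elliptic regularity of the dual Poisson problem, which requires the domain to be regular enough (e.g.\ convex polygonal), an assumption not needed elsewhere in the analysis; the identification of $r_h$ as a Ritz projection and the standard duality estimate are then immediate. The same argument can be phrased as a duality problem for the full operator $\cA$, but the scalar reduction is the cleanest route because in 2D the adjoint rotation automatically lies in $H(\curl, \Omega) = H^1(\Omega)$.
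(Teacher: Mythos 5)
Your proof is correct and rests on the same key object as the paper's: the Ritz projection $\mathcal{P}_R$ defined in \eqref{eq: orthogonality P_R} and its second-order $L^2$ accuracy \eqref{eq: approximation P_R}. The difference lies in how $r_h$ is related to $\mathcal{P}_R r$. The paper tests the error equation with the single composite function $\tilde x = (r_h - \mathcal{P}_R r,\ \Pi_h(u_h - u),\ 0,\ 0)$ and obtains only the orthogonality $\langle r_h - r,\ r_h - \mathcal{P}_R r\rangle_\Omega = 0$, from which $\| r_h - r \| \le \| (I - \mathcal{P}_R) r \|$ follows by Cauchy--Schwarz. You instead test separately with $(0, \curl \tilde r_h, 0, 0)$ and with $(\tilde r_h, 0, 0, 0)$ for $\tilde r_h \in \Ker(\curl, R_h)$, which yields the strictly stronger conclusion $r_h = \mathcal{P}_R r$: the curl--curl orthogonality forces $r_h - \mathcal{P}_R r \in \Ker(\curl, R_h)$, and in 2D the mean-value orthogonality kills the remaining constant (when $\partial_r \Omega \neq \emptyset$ the kernel is already trivial). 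This exact identification of $r_h$ as a parameter-independent Ritz projection is a clean by-product not stated in the paper. You are also more explicit about the one nontrivial ingredient: the paper simply asserts \eqref{eq: approximation P_R}, whereas you point out that it is the Aubin--Nitsche estimate and therefore requires $H^2$ elliptic regularity of the dual Poisson problem (e.g.\ convexity of $\Omega$ and, in general, compatibility of the mixed decomposition $\partial_r \Omega \cup \partial_u \Omega$) --- an assumption the paper uses implicitly. No gaps; your route is, if anything, slightly sharper.
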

\begin{proof}
    The equality was shown in \Cref{cor: invariant rot 2D}. For the second, we introduce the projection $\mathcal{P}_R: R \to R_h$ obtained by solving the following problem: find $\mathcal{P}_R r \in R_h$ such that
    \begin{align} \label{eq: orthogonality P_R}
        \langle \curl \mathcal{P}_R r, \curl \tilde r_h \rangle_\Omega
        &= \langle \curl r, \curl \tilde r_h \rangle_\Omega, &
        \forall \tilde r_h &\in R_h.
    \end{align}
    In case $\partial_r \Omega = \emptyset$, we set $\langle \mathcal{P}_R, 1 \rangle_\Omega = \langle r, 1 \rangle_\Omega$ to ensure uniqueness. Since $\curl$ is a rotated gradient in 2D, $\mathcal{P}_R r$ is an $\mathbb{L}_1$ approximation of the solution to a Laplace problem, which gives us the approximation property
    \begin{align} \label{eq: approximation P_R}
    \| (I - \mathcal{P}_R) r \| \lesssim h^2 \| r \|_2.
    \end{align}

    Let the test function $\tilde x := (r_h - \mathcal{P}_R r, \Pi_h(u_h - u),0,0)$ with $\Pi_h$ the $L^2$ projection onto $\Ran(\curl, R_h) \subseteq Q_h$, as in the proof of \Cref{lem: inf-sup disc}. We now use $\div \Pi_h = 0$ and the orthogonality property \eqref{eq: orthogonality P_R} to derive
    \begin{align*}
        0 =
        \langle \mathcal{A} (x_h - x), \tilde x \rangle
        &= \mu^{-1} \langle  r_h - r, r_h - \mathcal{P}_R r \rangle_\Omega
        - \langle u_h - u, \curl (r_h - \mathcal{P}_R r) \rangle_\Omega \\
        &\quad + \langle \Pi_h (u_h - u), \curl (r_h - r) \rangle_\Omega \\
        &= \mu^{-1} \langle  r_h - r, r_h - \mathcal{P}_R r \rangle_\Omega
        - \langle \Pi_h (u_h - u), \curl (I - \mathcal{P}_R) r \rangle_\Omega \\
        &= \mu^{-1} \langle  r_h - r, r_h - \mathcal{P}_R r \rangle_\Omega \\
        &= \mu^{-1} (\| r_h - r \|^2 + \langle  r_h - r, (I - \mathcal{P}_R) r \rangle_\Omega)
    \end{align*}

    Combining this with \eqref{eq: approximation P_R}, we arrive at the following bound
    \begin{align*}
        \| r_h - r \|
        = \frac{- \langle r_h - r, (I - \mathcal{P}_R) r \rangle_\Omega}{\| r_h - r \|}
        \le \| (I - \mathcal{P}_R) r \|
        \lesssim h^2 \| r \|_2.
    \end{align*}
\end{proof}

\section{Parameter-robust preconditioning for 4F-MFEM}
\label{sec: Parameter-robust preconditioning}

We follow the preconditioning framework \cite{mardal2011preconditioning}, which uses the Riesz representation operator as the canonical preconditioner.
In particular, let $\langle \cdot, \cdot \rangle_X$ be the inner product on $X$ that induces the norm $\| \cdot \|_X$ from \eqref{eq: norm X}. Then we define the preconditioner $\mathcal{P}: X_h' \to X_h$ as the operator that satisfies
\begin{align}
    \langle \mathcal{P} f, \tilde x \rangle_X &= \langle f, \tilde x \rangle,
    & \forall (f, \tilde x) &\in X_h' \times X_h.
\end{align}

Let $\mathcal{L}(X_h, X_h)$ denote the space of linear maps $X_h \to X_h$ and let $\| \cdot \|_{\mathcal{L}(X_h, X_h)}$ be its norm. From \Cref{lem: continuity cont}, we have $\| \mathcal{PA} \|_{\mathcal{L}(X_h, X_h)} \lesssim 1$ and, additionally, \Cref{lem: inf-sup disc} implies that $\| (\mathcal{PA})^{-1} \|_{\mathcal{L}(X_h, X_h)}^{-1} \gtrsim 1$. In turn, the condition number of the preconditioned system satisfies
\begin{align}
    \kappa(\mathcal{PA}) =
    \| \mathcal{PA} \|_{\mathcal{L}(X_h, X_h)} \| (\mathcal{PA})^{-1} \|_{\mathcal{L}(X_h, X_h)}
    \lesssim 1.
\end{align}

Since our analysis is based on bounds with constants that are independent of material parameters, this condition number is bounded from above for all admissible parameters. In turn, the operator $\mathcal{P}$ is a parameter-robust preconditioner for 4F-MFEM.

Due to the definition of the norm $\| \cdot \|_X$, the operator $\mathcal{P}$ has a block-diagonal structure
with the blocks defined according to their inverses:
\begin{align*}
    \langle \mathcal{P}_r^{-1} r, \tilde r \rangle &:=
        \mu^{-1} (\langle r, \tilde r \rangle_\Omega + \langle \curl r, \curl \tilde r \rangle_\Omega), \\
    \langle \mathcal{P}_u^{-1} u, \tilde u \rangle &:=
        \mu \langle u, \tilde u \rangle_\Omega + (2\mu + \lambda)\langle \div u, \div \tilde u \rangle_\Omega, \\
    \langle \mathcal{P}_q^{-1} q, \tilde q \rangle &:=
        K^{-1} \langle q, \tilde q \rangle_\Omega + \frac{\delta^2}{\eta + c_0} \langle \div q, \div \tilde q \rangle_\Omega, &
    \langle \mathcal{P}_p^{-1} p, \tilde p \rangle &:=
        (\eta + c_0) \langle p, \tilde p \rangle_\Omega.
\end{align*}

We emphasize that this preconditioner requires solving independent systems for the variables $r$, $u$, $q$, and $p$. In order to generate a preconditioner that is scalable for larger systems, these solves can be replaced by spectrally equivalent operators \cite{mardal2011preconditioning}. However, such extensions are beyond the scope of this work.

\section{Numerical results}
\label{sec: Numerical results}

In this section, we present numerical experiments to show the performance of the proposed schemes.
We first perform convergence studies in \Cref{subsec:error} to verify the results from \Cref{sec:discretization} and \Cref{sec:a_multipoint_rotation_flux_mfe_method}. Subsequently, \Cref{sub:parameter_robust_preconditioning} shows
the robustness of the preconditioner introduced in
\Cref{sec: Parameter-robust preconditioning}. Finally, \Cref{subsec:mandel} presents the
approximations for the Mandel problem computed by the proposed four-field and multipoint schemes.

We focus on the lowest order instances of the families of finite elements introduced in \Cref{sec:discretization}. We refer to the first as the two-field mixed finite element method for elasticity (\emph{2F-MFEM}), respectively the four-field MFEM for poroelasticity (\emph{4F-MFEM}). For the second kind, we apply the quadrature rule from \Cref{sec:a_multipoint_rotation_flux_mfe_method}, and we use the acronym \emph{MR-MFEM} to refer to the resulting multipoint rotation(-flux) mmixed finite element method.

All the numerical results are obtained with the libraries PorePy
\cite{keilegavlen2021porepy} and PyGeoN \cite{pygeon}. The scripts of all the test are publicly
available at \url{https://github.com/compgeo-mox/rotation_based_biot}.

\subsection{Convergence study}\label{subsec:error}

In this section we evaluate the performance of the method by considering the numerical errors. 
In particular, we consider problems \eqref{eq:mechanics} and
\eqref{eqs: Biot strong cont} for $n=2$ and $n=3$, with the computational domain
given by the unit cube $\Omega=(0, 1)^n$. For
simplicity, all material parameters are set to 1 and we assume homogeneous essential conditions on $\partial \Omega$.

\subsubsection{Linear elasticity}
\label{ssub: numerics Linear elasticity}

We consider the following
exact solutions for $n=2$ and 3 of Problem \eqref{eq:mechanics}, respectively:
\begin{subequations} \label{eq:sol1}
\begin{align}
    u(x, y) &=x^2y^2(1 - x)^2(1 - y)^2
    [4, -1]^T, \\
    u(x, y, z) &=x^2y^2z^2(1 - x)^2(1 - y)^2(1 - z)^2
    [4, -1, 2]^T,
\end{align}
\end{subequations}
and we set $r = \nabla \times u$. The source term $f_u$ is computed accordingly.

The relative $L^2$ errors against the analytical solutions are reported in
\Cref{tab:errmechdiff}.  We notice
second order convergence of $r$ for $n=2$ for both 2F-MFEM and MR-MFEM as shown in
\Cref{lem: improved estimate}. In
all the other cases, the unknowns converge linearly with respect to the mesh
size, as expected by \Cref{thm: Error estimate} and \Cref{lem: Convergence MR-MFEM}. Moroever, the number of degrees of freedom is significantly smaller for
MR-MFEM compared to the 2F-MFEM, and the errors are observed to be larger for the former than for the latter.

\begin{table}[htb]
    \centering
    \scriptsize
    \caption{Relative $L^2$ errors and convergence rates for the solutions $r$ and $u$,
    curl of the rotation $\nabla \times r$, and divergence of the displacement $\nabla \cdot u$ against the analytical solutions.
    Results for the elasticity example in \Cref{ssub: numerics Linear elasticity}.}
    \label{tab:errmechdiff}
    \setlength{\tabcolsep}{4pt}
    \begin{tabular}{cc|ccccccccc}
        & & \multicolumn{9}{|c}{2F-MFEM}\\
        & $h$
        & $N_{\text{dof}}$
        & $\text{Err}_r$
        & $\text{Rate}_r$
        & $\text{Err}_u$
        & $\text{Rate}_u$
        & $\text{Err}_{\nabla \times r}$
        & $\text{Rate}_{\nabla \times r}$
        & $\text{Err}_{\nabla \cdot u}$
        & $\text{Rate}_{\nabla \cdot u}$
        \\
        \hline
        \multirow{5}{*}{\rotatebox[origin=c]{90}{$n = 2$}}
        & 6.42e-2 & 1297   & 4.72e-3 & -    & 4.47e-2 & -    &  2.19e-2 & -    & 1.85e-1 & -   \\
        & 3.17e-2 & 4929   & 1.09e-3 & 2.07 & 2.22e-2 & 1.00 &  9.92e-3 & 1.13 & 9.28e-2 & 0.98\\
        & 1.57e-2 & 19297  & 2.47e-4 & 2.11 & 1.11e-2 & 1.00 &  3.53e-3 & 1.47 & 4.66e-2 & 0.98\\
        & 7.83e-3 & 76465  & 5.92e-5 & 2.06 & 5.52e-3 & 1.00 &  1.36e-3 & 1.37 & 2.34e-2 & 0.99\\
        & 3.91e-3 & 304473 & 1.41e-5 & 2.07 & 2.76e-3 & 1.00 &  5.00e-4 & 1.45 & 1.17e-2 & 1.00\\
        \hline
        \hline
        \multirow{5}{*}{\rotatebox[origin=c]{90}{$n = 3$}}
        & 2.34e-1 & 5900   & 1.14e-1 & -     & 1.02e-1 & -    & 1.95e-1 & -    & 6.12e-1 & - \\
        & 1.48e-1 & 22305  & 6.65e-2 & 1.18  & 5.34e-2 & 1.42 & 1.41e-1 & 0.71 & 3.69e-1 & 1.10\\
        & 1.08e-1 & 54435  & 4.26e-2 & 1.42  & 3.75e-2 & 1.13 & 9.77e-2 & 1.16 & 2.73e-1 & 0.96\\
        & 8.58e-2 & 106001 & 3.28e-2 & 1.13  & 2.88e-2 & 1.14 & 8.09e-2 & 0.81 & 2.14e-1 & 1.05\\
        & 7.09e-2 & 185943 & 2.74e-2 & 0.96  & 2.34e-2 & 1.09 & 6.56e-2 & 1.10 & 1.76e-1 & 1.03\\
        \hline
        \hline
        & & \multicolumn{9}{|c}{MR-MFEM}\\
        \hline
        \multirow{5}{*}{\rotatebox[origin=c]{90}{$n = 2$}}
        & 6.42e-2 & 956    & 4.72e-3 & -    & 4.46e-2 & -   & 2.19e-2 & -    & 1.85e-1 & -     \\
        & 3.17e-2 & 3664   & 1.09e-3 & 2.07 & 2.21e-2 & 0.99& 9.92e-3 & 1.13 & 9.28e-2 & 0.98  \\
        & 1.57e-2 & 14408  & 2.47e-4 & 2.11 & 1.11e-2 & 0.99& 3.53e-3 & 1.47 & 4.66e-2 & 0.98  \\
        & 7.83e-3 & 57220  & 5.92e-5 & 2.06 & 5.52e-3 & 1.00& 1.36e-3 & 1.37 & 2.34e-2 & 0.99  \\
        & 3.91e-3 & 228098 & 1.41e-5 & 2.07 & 2.76e-3 & 1.00& 5.00e-4 & 1.45 & 1.17e-2 & 1.00  \\
        \hline
        \hline
        \multirow{5}{*}{\rotatebox[origin=c]{90}{$n = 3$}}
        & 2.34e-1 & 3510   & 2.67e-1 & -    & 1.11e-1 & -    & 1.95e-1 & -    & 6.12e-1 & -     \\
        & 1.48e-1 & 13576  & 1.22e-1 & 1.71 & 5.48e-2 & 1.54 & 1.41e-1 & 0.71 & 3.69e-1 & 1.10  \\
        & 1.08e-1 & 33462  & 8.16e-2 & 1.29 & 3.83e-2 & 1.15 & 9.77e-2 & 1.16 & 2.73e-1 & 0.96  \\
        & 8.58e-2 & 65554  & 6.05e-2 & 1.29 & 2.92e-2 & 1.17 & 8.09e-2 & 0.81 & 2.14e-1 & 1.05  \\
        & 7.09e-2 & 115434 & 4.84e-2 & 1.17 & 2.36e-2 & 1.11 & 6.56e-2 & 1.10 & 1.76e-1 & 1.03
    \end{tabular}
\end{table}

In \Cref{tab:errmechcomparison}, we compare the solutions of the two
methods by computing the relative norms of their differences and associated convergence rates. 
We observe that the curl of the rotation $\curl r$ and the divergence of the displacement
$\div u$ are unaffected by the quadrature rule \eqref{eq: quadrature
rule}, as was shown in \Cref{lem: invariant curl}
and \Cref{cor: invariant div u}, respectively. 
Moreover, the computated rotation is identical for the two methods if $n = 2$, in agreement with \Cref{cor: invariant rot 2D}.

\begin{table}[htb]
    \centering
    \scriptsize    
    \caption{Relative differences in $L^2$ between the
    solutions obtained with 2F-MFEM and MR-MFEM for the rotation $r$,
    displacement $u$, curl of the rotation $\nabla \times r$, and divergence of the displacement $\nabla \cdot u$ of \eqref{eq:mechanics}. Results for the elasticity example in \Cref{ssub: numerics Linear elasticity}.}
    \label{tab:errmechcomparison}
    \setlength{\tabcolsep}{4pt}
    \begin{tabular}{cc|cccccc}
        & & \multicolumn{6}{|c}{2F-MFEM vs MR-MFEM}\\
        & $h$ & $\text{Err}_r$ & $\text{Rate}_r$ &
        $\text{Err}_u$ & $\text{Rate}_u$ & $\text{Err}_{\nabla \times r}$
        & $\text{Err}_{\nabla \cdot u}$
        \\
        \hline
        \multirow{5}{*}{\rotatebox[origin=c]{90}{$n = 2$}}
        & 6.42e-2 & 1.56e-14 & - & 1.21e-2 & -    & 7.18e-14 & 8.97e-15\\
        & 3.17e-2 & 5.02e-14 & - & 3.08e-3 & 1.94 & 3.44e-13 & 2.40e-14\\
        & 1.57e-2 & 1.21e-13 & - & 7.71e-4 & 1.97 & 1.62e-12 & 3.97e-14\\
        & 7.83e-3 & 3.33e-13 & - & 1.93e-4 & 1.99 & 7.89e-12 & 1.07e-13\\
        & 3.91e-3 & 1.22e-12 & - & 4.82e-5 & 2.00 & 3.65e-11 & 6.30e-13\\
        \hline
        \hline
        \multirow{5}{*}{\rotatebox[origin=c]{90}{$n = 3$}}
        & 2.34e-1 & 1.54e-1 & -    & 5.61e-2 & -    & 5.52e-14 & 6.58e-15  \\
        & 1.48e-1 & 1.01e-1 & 0.93 & 2.21e-2 & 2.04 & 3.42e-13 & 2.26e-14  \\
        & 1.08e-1 & 2.16e-2 & 1.07 & 1.16e-2 & 2.06 & 9.46e-13 & 4.94e-14  \\
        & 8.58e-2 & 5.71e-2 & 1.01 & 7.26e-3 & 2.02 & 1.39e-12 & 8.57e-14 \\
        & 7.09e-2 & 4.67e-2 & 1.06 & 4.91e-3 & 2.05 & 3.59e-12 & 1.89e-13
    \end{tabular}
\end{table}

\subsubsection{Poroelasticity}
\label{ssub: numerics Poroelasticity}

Next, we consider Problem \eqref{eqs: Biot strong cont}. Let the exact solutions for the
displacement be given by \eqref{eq:sol1}, while for the flow variables, we consider:
\begin{gather*}
    q(x, y) = [\sin(2x\pi)\sin(2y\pi),
    xy(1 - x)(1 - y)]^T,
    \quad
    p(x, y) =
    xy(1 - x)(1 - y),\\
    q(x, y, z) =
    \begin{bmatrix}
        \sin(2x\pi)\sin(2y\pi)\sin(2z\pi)\\
        xyz(1 - x)(1 - y)(1-z)\\
        y(1 - y)\sin(2x\pi)\sin(2z\pi)
    \end{bmatrix},
    \quad
    p(x, y, z) =
    xyz(1 - x)(1 - y)(1-z).
\end{gather*}
Moreover, we set $r = \nabla \times u$, and the source terms in the second and fourth
equations of \eqref{eq: Biot system} are computed accordingly. For
simplicity we have considered also a vector source term in the third equation of
\eqref{eq: Biot system}, which does not affect the previously
introduced theory.

In \Cref{tab:errbiothyb} we present the relative $L^2$ errors against the
analytical solutions of the unknowns
for both methods in 2D and 3D. We notice that in all cases the errors decay
at least with order one, which is expected by \Cref{thm: Error estimate} and \Cref{lem: Convergence MR-MFEM}. 
The
rotation in 2D is again second order convergent as observed in the previous example
and supported by \Cref{lem: improved estimate}.
We notice that in 3D, the errors for the MR-MFEM are higher than
4F-MFEM, also higher than in the previous example, probably due to the
fact that we are now performing hybridization on two variables. The inital higher order of convergence are due to the high errors obtained on the coarse grids we start with.

\Cref{tab:errbiotdifferentialmulti} contains the relative $L^2$ errors for the
differentials, i.e. the relevant curl and divergence, of the numerical solutions against their analytical counterparts.
Also in this case we obtain at least order one for all variables, in agreement with
the theory.

\begin{table}[htb]
    \centering
    \scriptsize
    \caption{Relative $L^2$ errors and convergence rates for the
    solutions of \eqref{eqs: Biot strong cont} against the analytical solutions.
    Results for the poroelasticity example from \Cref{ssub: numerics Poroelasticity}.}
    \label{tab:errbiothyb}
    \setlength{\tabcolsep}{4pt}
    \begin{tabular}{cc|ccccccccc}
        & & \multicolumn{9}{|c}{4F-MFEM}\\
        & $h$ & $N_{\text{dof}}$ & $\text{Err}_r$ & $\text{Rate}_r$ &
        $\text{Err}_u$ & $\text{Rate}_u$ & $\text{Err}_q$
        & $\text{Rate}_q$ & $\text{Err}_p$ & $\text{Rate}_p$ \\
        \hline
        \multirow{5}{*}{\rotatebox[origin=c]{90}{$n = 2$}}
        & 6.42e-2 & 2869   & 3.06e-2 & -    & 5.62e-2 & -    & 8.05e-2 & -    & 1.12e-1 & -    \\
        & 3.17e-2 & 10993  & 7.83e-3 & 1.93 & 2.38e-2 & 1.22 & 4.01e-2 & 0.99 & 5.67e-2 & 1.22 \\
        & 1.57e-2 & 43225  & 1.95e-3 & 1.97 & 1.13e-2 & 1.06 & 2.00e-2 & 0.99 & 2.84e-2 & 1.06 \\
        & 7.83e-3 & 171661 & 4.88e-4 & 2.00 & 5.55e-3 & 1.02 & 9.99e-3 & 1.00 & 1.42e-2 & 1.02 \\
        & 3.91e-3 & 684295 & 1.22e-4 & 2.00 & 2.77e-3 & 1.00 & 5.00e-3 & 1.00 & 7.12e-3 & 1.00 \\
        \hline
        \hline
        \multirow{5}{*}{\rotatebox[origin=c]{90}{$n = 3$}}
        & 2.34e-1 & 10988  & 1.32e-1 & -    & 1.41e-1 & -    & 2.87e-2 & -    & 4.48e-1 & - \\
        & 1.48e-1 & 42227  & 7.34e-2 & 1.29 & 6.50e-2 & 1.69 & 1.18e-2 & 1.95 & 2.60e-1 & 1.19 \\
        & 1.08e-1 & 103811 & 4.62e-2 & 1.48 & 4.09e-2 & 1.48 & 6.22e-3 & 2.04 & 1.88e-1 & 1.04 \\
        & 8.58e-2 & 203050 & 3.49e-2 & 1.21 & 3.01e-2 & 1.32 & 4.01e-3 & 1.89 & 1.46e-1 & 1.08 \\
        & 7.09e-2 & 357203 & 2.85e-2 & 1.05 & 2.42e-2 & 1.16 & 2.81e-3 & 1.87 & 1.20e-1 & 1.04\\
        \hline
        \hline
        & & \multicolumn{9}{|c}{MR-MFEM}\\
        \hline
        \multirow{5}{*}{\rotatebox[origin=c]{90}{$n = 2$}}
        & 6.42e-2 & 1572   & 3.06e-2 & -    & 5.55e-2 & -    & 8.05e-2 & -    & 1.15e-1 & -    \\
        & 3.17e-2 & 6064   & 7.83e-3 & 1.93 & 2.37e-2 & 1.21 & 4.01e-2 & 0.99 & 5.70e-2 & 0.99 \\
        & 1.57e-2 & 23928  & 1.95e-3 & 1.97 & 1.13e-2 & 1.06 & 2.00e-2 & 0.99 & 2.84e-2 & 0.99 \\
        & 7.83e-3 & 95196  & 4.88e-4 & 2.00 & 5.55e-3 & 1.02 & 9.99e-3 & 1.00 & 1.42e-2 & 1.00 \\
        & 3.91e-3 & 379822 & 1.22e-4 & 2.00 & 2.76e-3 & 1.00 & 5.00e-3 & 1.00 & 7.12e-3 & 1.00 \\
        \hline
        \hline
        \multirow{5}{*}{\rotatebox[origin=c]{90}{$n = 3$}}
        & 2.34e-1 & 5088   & 2.74e-1 & -    & 1.39e+0 & -    & 1.82e-1 & -    & 1.25e+0 & -\\
        & 1.48e-1 & 19922  & 1.26e-1 & 1.70 & 5.57e-1 & 2.01 & 8.57e-2 & 1.65 & 5.39e-1 & 1.85\\
        & 1.08e-1 & 49376  & 8.32e-2 & 1.32 & 2.93e-1 & 2.05 & 5.49e-2 & 1.43 & 3.12e-1 & 1.74\\
        & 8.58e-2 & 97049  & 6.14e-2 & 1.31 & 1.85e-1 & 1.97 & 4.17e-2 & 1.18 & 2.15e-1 & 1.61\\
        & 7.09e-2 & 171260 & 4.90e-2 & 1.19 & 1.27e-1 & 1.98 & 3.31e-2 & 1.21 & 1.61e-1 & 1.52
    \end{tabular}
\end{table}

\begin{table}[htb]
    \centering
    \scriptsize
    \caption{Relative $L^2$ errors for the
    curl of the rotation $\nabla \times r$, divergence of the displacement and flux $\nabla \cdot u$ and $\nabla \cdot q$, respectively, of \eqref{eqs: Biot strong cont} against the analytical solutions.
    Results for the poroelasticity example from \Cref{ssub: numerics Poroelasticity}.}
    \label{tab:errbiotdifferentialmulti}
    \setlength{\tabcolsep}{4pt}
    \begin{tabular}{cc|cccccc}
        & & \multicolumn{6}{|c}{4F-MFEM}\\
        & $h$
        & $\text{Err}_{\nabla \times r}$
        & $\text{Rate}_{\nabla \times r}$
        & $\text{Err}_{\nabla \cdot u}$
        & $\text{Rate}_{\nabla \cdot u}$
        & $\text{Err}_{\nabla \cdot q}$
        & $\text{Rate}_{\nabla \cdot q}$
        \\
        \hline
        \multirow{5}{*}{\rotatebox[origin=c]{90}{$n = 2$}}
        & 6.42e-2 & 3.38e-2 & -    & 1.86e-1 & -    & 2.25e-1 & -    \\
        & 3.17e-2 & 1.30e-2 & 1.36 & 9.29e-2 & 0.98 & 1.13e-1 & 0.97 \\
        & 1.57e-2 & 4.35e-3 & 1.55 & 4.66e-2 & 0.98 & 5.68e-2 & 0.98 \\
        & 7.83e-3 & 1.59e-3 & 1.45 & 2.34e-2 & 0.99 & 2.84e-2 & 1.00 \\
        & 3.91e-3 & 5.70e-4 & 1.47 & 1.17e-2 & 1.00 & 1.42e-2 & 1.00 \\
        \hline
        \hline
        \multirow{5}{*}{\rotatebox[origin=c]{90}{$n = 3$}}
        & 2.34e-1 & 3.22e-1 & -    & 6.70e-1 & -    & 7.31e-1 & -     \\
        & 1.48e-1 & 2.31e-1 & 0.72 & 3.79e-1 & 1.25 & 4.74e-1 & 0.95  \\
        & 1.08e-1 & 1.61e-1 & 1.16 & 2.76e-1 & 1.02 & 3.48e-1 & 0.99  \\
        & 8.58e-2 & 1.30e-1 & 0.91 & 2.15e-1 & 1.07 & 2.77e-1 & 0.98  \\
        & 7.09e-2 & 1.07e-1 & 1.05 & 1.77e-1 & 1.04 & 2.28e-1 & 1.01  \\
        \hline
        \hline
        & & \multicolumn{6}{|c}{MR-MFEM}\\
        \hline
        \multirow{5}{*}{\rotatebox[origin=c]{90}{$n = 2$}}
        & 6.42e-2 & 3.38e-2 & -    & 1.87e-1 & -    & 2.25e-1 & -    \\
        & 3.17e-2 & 1.30e-2 & 1.36 & 9.30e-2 & 0.99 & 1.13e-1 & 0.97 \\
        & 1.57e-2 & 4.35e-3 & 1.55 & 4.66e-2 & 0.98 & 5.68e-2 & 0.98 \\
        & 7.83e-3 & 1.59e-3 & 1.45 & 2.34e-2 & 0.99 & 2.84e-2 & 1.00 \\
        & 3.91e-3 & 5.70e-4 & 1.47 & 1.17e-2 & 1.00 & 1.42e-2 & 1.00 \\
        \hline
        \hline
        \multirow{5}{*}{\rotatebox[origin=c]{90}{$n = 3$}}
        & 2.34e-1 & 3.22e-1 & -    & 3.65e+0 & -    & 7.30e-1 & -     \\
        & 1.48e-1 & 2.31e-1 & 0.72 & 1.47e+0 & 1.98 & 4.74e-1 & 0.94  \\
        & 1.08e-1 & 1.61e-1 & 1.16 & 8.01e-1 & 1.95 & 3.47e-1 & 0.99  \\
        & 8.58e-2 & 1.30e-1 & 0.91 & 5.21e-1 & 1.85 & 2.77e-1 & 0.98  \\
        & 7.09e-2 & 1.07e-1 & 1.05 & 3.68e-1 & 1.82 & 2.28e-1 & 1.01
    \end{tabular}
\end{table}

Finally, in \Cref{tab:errbiotcomparison} and
\Cref{tab:errbiotcomparisondifferential} we compare the solutions of the two
numerical methods by computing the relative norms of their difference. As in the elasticity example, we
notice that in 2D the curl of $r$ is unaffected by the quadrature rule
\eqref{eq: quadrature rule} in accordance with \Cref{lem: invariant curl}.
For the other variables we get at least first order convergence, because both solutions converge linearly to the true solution.

\begin{table}[htb]
    \centering
    \scriptsize
    \caption{
    Relative differences in $L^2$ between the
    solutions obtained with the 4F-MFEM and MR-MFEM scheme for the rotation $r$,
    displacement $u$, flux $q$, and pressure $p$. Results for the poroelasticity example in
    \Cref{ssub: numerics Poroelasticity}.}
    \label{tab:errbiotcomparison}
    \setlength{\tabcolsep}{4pt}
    \begin{tabular}{cc|cccccccc}
        & $h$ & $\text{Err}_r$ & $\text{Rate}_r$ &
        $\text{Err}_u$ & $\text{Rate}_u$ & $\text{Err}_q$ & $\text{Rate}_q$
        & $\text{Err}_p$ & $\text{Rate}_p$
        \\
        \hline
        \multirow{5}{*}{\rotatebox[origin=c]{90}{$n = 2$}}
        & 6.42e-2 & 1.13e-13 & - & 1.35e-2 & -    & 7.66e-4 & -    & 2.23e-2 & -    \\
        & 3.17e-2 & 3.17e-13 & - & 3.37e-3 & 1.97 & 2.74e-4 & 1.46 & 5.60e-3 & 1.96 \\
        & 1.57e-2 & 9.82e-13 & - & 8.39e-4 & 1.98 & 8.20e-5 & 1.71 & 1.40e-3 & 1.97 \\
        & 7.83e-3 & 1.38e-12 & - & 2.10e-4 & 1.99 & 3.13e-5 & 1.38 & 3.52e-4 & 1.99 \\
        & 3.91e-3 & 7.31e-12 & - & 5.24e-5 & 2.00 & 1.08e-5 & 1.54 & 8.80e-5 & 2.00 \\
        \hline
        \hline
        \multirow{5}{*}{\rotatebox[origin=c]{90}{$n = 3$}}
        & 2.34e-1 & 1.58e-1 & -    & 7.88e-1 & -    & 3.72e-1 & -    & 7.47e-1 & -    \\
        & 1.48e-1 & 1.02e-1 & 0.97 & 4.80e-1 & 1.08 & 1.40e-1 & 2.14 & 4.22e-1 & 1.25 \\
        & 1.08e-1 & 7.25e-2 & 1.08 & 2.71e-1 & 1.82 & 7.83e-2 & 1.86 & 2.35e-1 & 1.87 \\
        & 8.58e-2 & 5.72e-2 & 1.03 & 1.75e-1 & 1.90 & 5.43e-2 & 1.58 & 1.51e-1 & 1.90 \\
        & 7.09e-2 & 4.67e-2 & 1.06 & 1.20e-1 & 1.96 & 4.05e-2 & 1.54 & 1.03e-1 & 1.99
    \end{tabular}
\end{table}

\begin{table}[htb]
    \centering
    \scriptsize
    \caption{Relative differences in $L^2$ between the
    curl of the rotation $\nabla \times r$, divergence of the displacement $\nabla \cdot u$, and divergence of the flux $\nabla \cdot q$ of the solutions obtained with the 4F-MFEM and MR-MFEM. Results for the poroelasticity example in
    \Cref{ssub: numerics Poroelasticity}.}
    \label{tab:errbiotcomparisondifferential}
    \setlength{\tabcolsep}{4pt}
    \begin{tabular}{cc|ccccccccc}
        & $h$
        &  $\text{Err}_{\nabla \times r}$
        & $\text{Err}_{\nabla \cdot u}$ &
        $\text{Rate}_{\nabla \cdot u}$
        & $\text{Err}_{\nabla \cdot q}$ &
        $\text{Rate}_{\nabla \cdot q}$
        \\
        \hline
        \multirow{5}{*}{\rotatebox[origin=c]{90}{$n = 2$}}
        & 6.42e-2 & 3.35e-13 & 1.63e-2 & -    & 3.58e-4 & -    \\
        & 3.17e-2 & 1.75e-12 & 4.11e-3 & 1.95 & 9.00e-5 & 1.96 \\
        & 1.57e-2 & 1.03e-11 & 1.03e-3 & 1.97 & 2.26e-5 & 1.97 \\
        & 7.83e-3 & 3.73e-11 & 2.59e-4 & 1.99 & 5.65e-6 & 1.99 \\
        & 3.91e-3 & 1.90e-10 & 6.47e-5 & 2.00 & 1.41e-6 & 2.00 \\
        \hline
        \hline
        \multirow{5}{*}{\rotatebox[origin=c]{90}{$n = 3$}}
        & 2.34e-1 & 6.35e-13 & 9.56e-1 & -    & 4.55e-3 & -    \\
        & 1.48e-1 & 9.37e-12 & 8.49e-1 & 0.26 & 1.82e-3 & 2.01 \\
        & 1.08e-1 & 1.03e-11 & 6.22e-1 & 0.99 & 9.57e-4 & 2.05 \\
        & 8.58e-2 & 3.33e-11 & 4.40e-1 & 1.50 & 6.05e-4 & 1.98 \\
        & 7.09e-2 & 5.48e-11 & 3.12e-1 & 1.80 & 4.12e-4 & 2.01
    \end{tabular}
\end{table}

\subsection{Parameter-robust preconditioning}
\label{sub:parameter_robust_preconditioning}

In this part, we present the performance of the parameter-robust preconditioner
for 4F-MFEM presented in \Cref{sec: Parameter-robust preconditioning} on the test case from \Cref{subsec:error}.
For this, we first symmetrize the system by negating the second and third rows of $\mathcal{A}$ in \eqref{eqs: Biot strong cont}. 
We then consider the MINRES iterative algorithm with stopping criteria based on
relative residual tolerance, which is set to $10^{-5}$.
In \Cref{fig:biot_plot} we present the number of iterations obtained for $n=2$
and $3$ for a wide range of the material parameters. We see that the number of iterations is
stable for most of the parameter values. We notice a slight dependency in a few
cases, but the number of iterations remains moderate.

\begin{figure}
    \centering
    \text{$n=2$}\\[0.25cm]
    \includegraphics[width=0.95\textwidth]{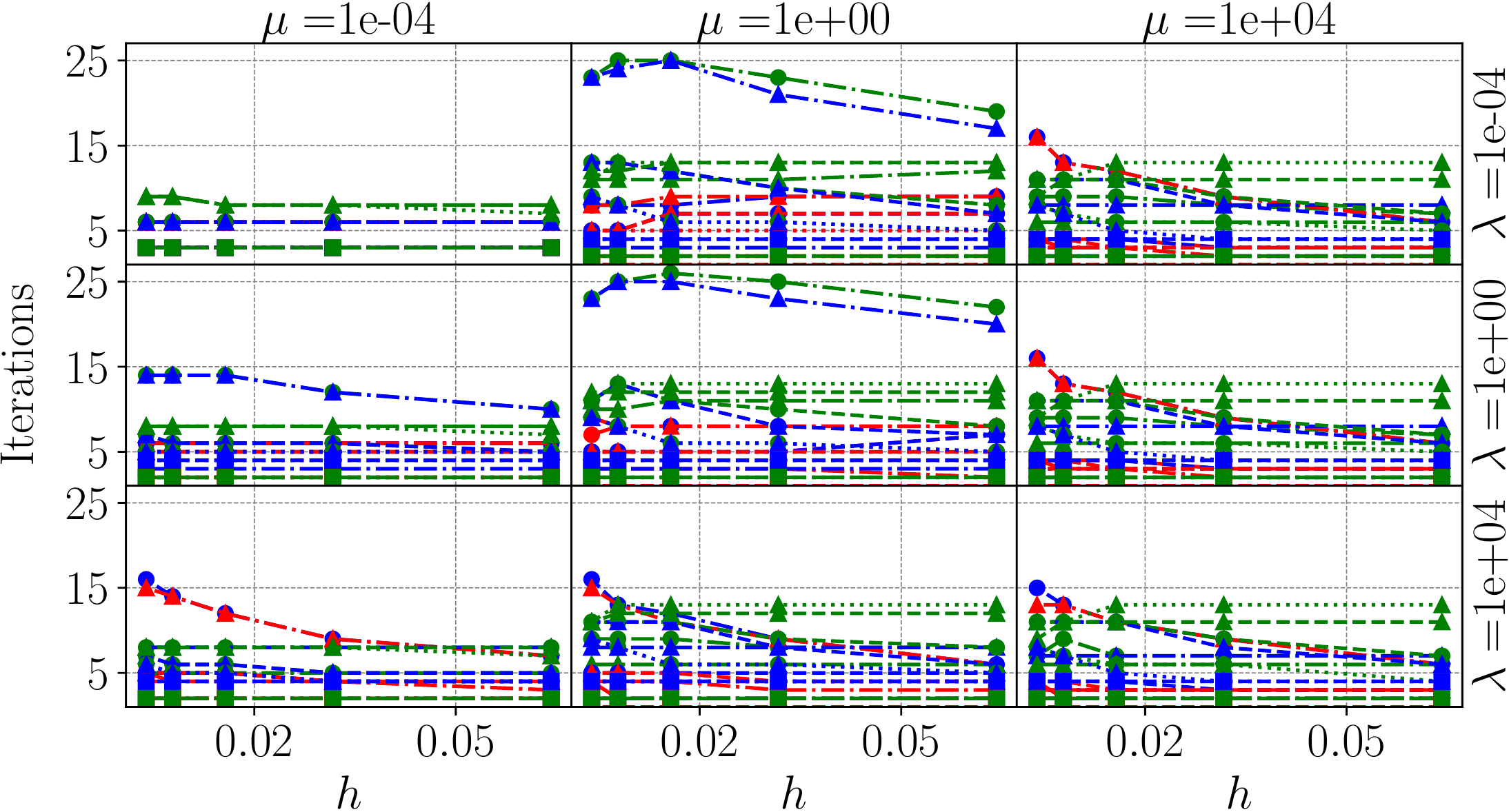}\\[0.4cm]
    \text{$n=3$}\\[0.25cm]
    \includegraphics[width=0.95\textwidth]{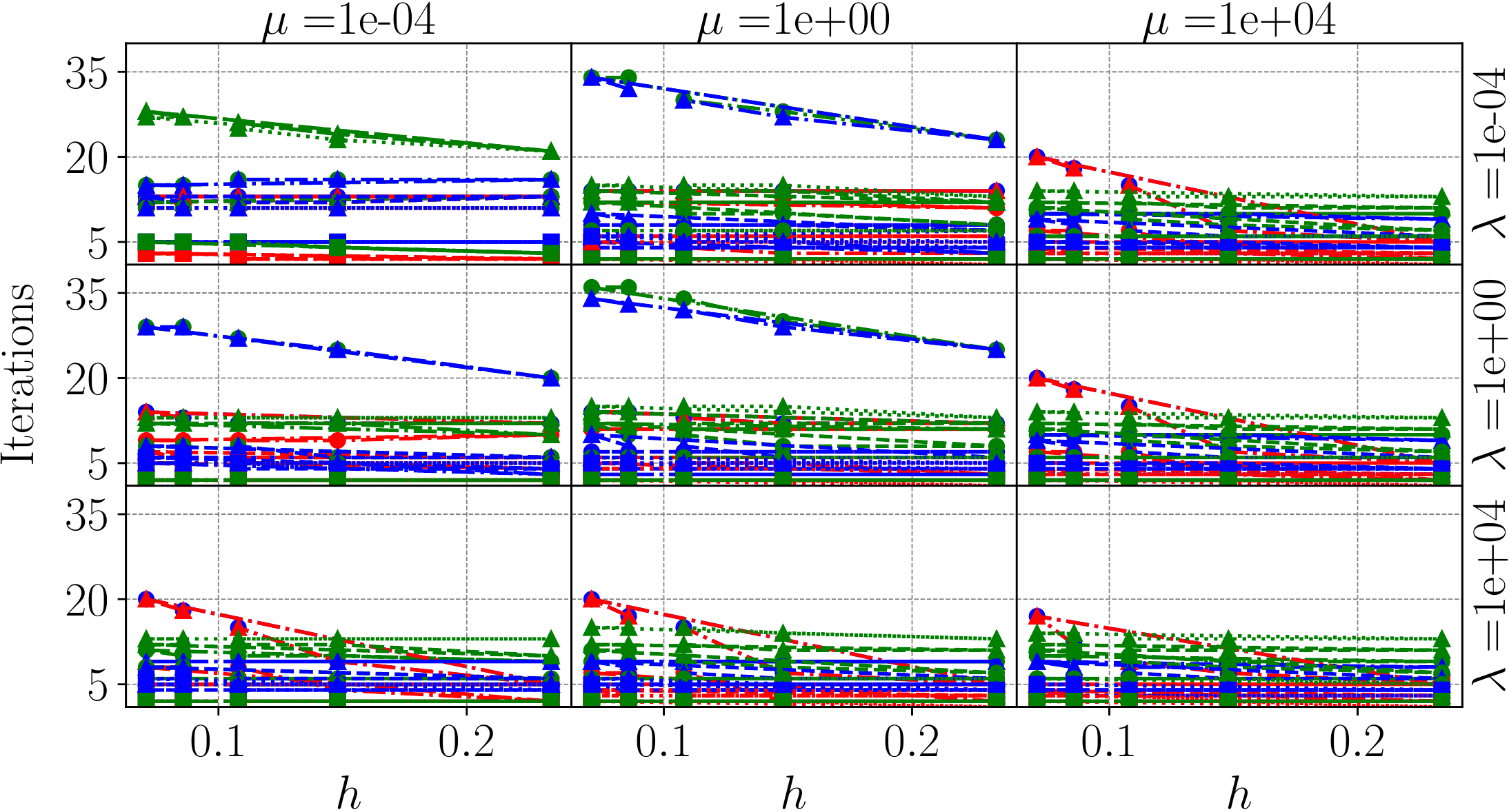}\\
    \includegraphics[width=0.65\textwidth]{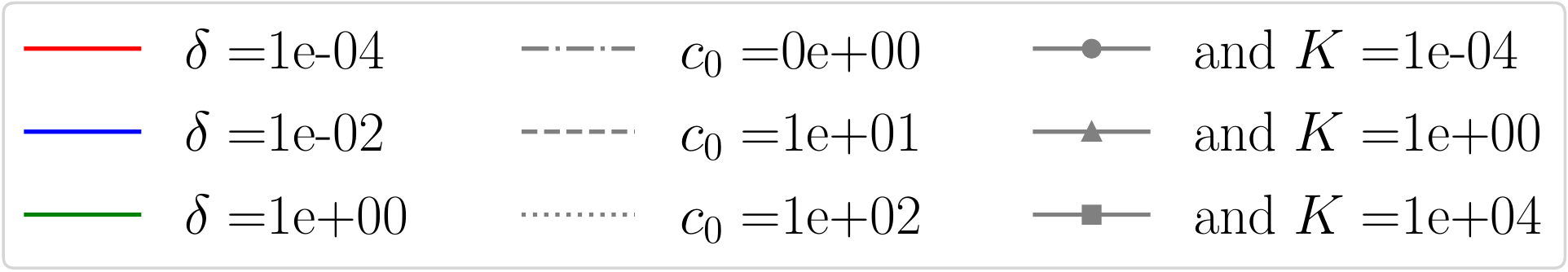}
    \caption{The number of MINRES iterations remains stable for a wide range of parameter values when applying the proposed preconditioner.}
    \label{fig:biot_plot}
\end{figure}

Finally, we have applied the same strategy for the elasticity problem
\eqref{eq:mechanics}, which can be seen as the limit case of $\alpha=0$. By varying its parameters and mesh size, we obtain a stable number of
iterations between 2 and 5 (not reported in a figure), confirming the robustness of the
preconditioner.


\subsection{Mandel's  problem}\label{subsec:mandel}

In this section we consider Mandel's test case \cite{Abousleiman1996}, which admits an
exact solution in 2D and exhibits interesting time-dependent effects. Let us
consider a two-dimensional, poroelastic domain between two rigid
plates on the upper and bottom boundaries of the domains, that are free to slide.
A force of $2F$ is applied on the plates causing a uniform displacement $\nu \cdot u$ that
is independent of $x$. The domain is displayed in the left of \Cref{dom_mandel}. Due to the symmetries of the problem, we consider a quarter of the domain.
Classically, the following boundary conditions are applied:
\begin{align*}
    &&
    \nu \cdot u  &=0, & 
    \nu \times (\sigma \nu)  &= 0, & 
    \nu \cdot q  &= 0, & 
    \text{on } &\Gamma_1 \cup \Gamma_2,  \\
    &&
    && 
    \sigma \nu &=0, & 
    p &=0, & 
    \text{on } &\Gamma_3, \\
    \nu \cdot u &= \text{const}, & 
    \frac1a \int_{\Gamma_4} \nu \cdot (\sigma \nu) &= -2F, &
    \nu \times (\sigma \nu) &= 0, &
    \nu \cdot q &= 0 & \text{on } &\Gamma_4.
\end{align*}

We adapt these conditions to our formulation of the problem by setting $\partial_u \Omega = \partial \Omega$, $\partial_p \Omega = \Gamma_3$, and $\partial_q \Omega = \partial \Omega \setminus \Gamma_3$. First, on $\Gamma_1$ and $\Gamma_2$, we set $\nu \cdot u = 0$ and, together with $\nu \times (\sigma \nu) = 0$, we obtain $r = 0$, which is set as an essential condition. On the other hand, on $\Gamma_3$ and $\Gamma_4$, we set $\nu \cdot u$ to be equal to the analytical solution and $r = 0$. The exact solution for pressure, displacement and stress can be expressed as a
series as in \cite{phillips}.

\begin{figure}[!ht]
    \centering
    \def\svgwidth{0.4\columnwidth}
\begingroup%
  \makeatletter%
  \providecommand\color[2][]{%
    \errmessage{(Inkscape) Color is used for the text in Inkscape, but the package 'color.sty' is not loaded}%
    \renewcommand\color[2][]{}%
  }%
  \providecommand\transparent[1]{%
    \errmessage{(Inkscape) Transparency is used (non-zero) for the text in Inkscape, but the package 'transparent.sty' is not loaded}%
    \renewcommand\transparent[1]{}%
  }%
  \providecommand\rotatebox[2]{#2}%
  \newcommand*\fsize{\dimexpr\f@size pt\relax}%
  \newcommand*\lineheight[1]{\fontsize{\fsize}{#1\fsize}\selectfont}%
  \ifx\svgwidth\undefined%
    \setlength{\unitlength}{449.85142505bp}%
    \ifx\svgscale\undefined%
      \relax%
    \else%
      \setlength{\unitlength}{\unitlength * \real{\svgscale}}%
    \fi%
  \else%
    \setlength{\unitlength}{\svgwidth}%
  \fi%
  \global\let\svgwidth\undefined%
  \global\let\svgscale\undefined%
  \makeatother%
  \begin{picture}(1,0.66355286)%
    \lineheight{1}%
    \setlength\tabcolsep{0pt}%
    \put(0,0){\includegraphics[width=\unitlength,page=1]{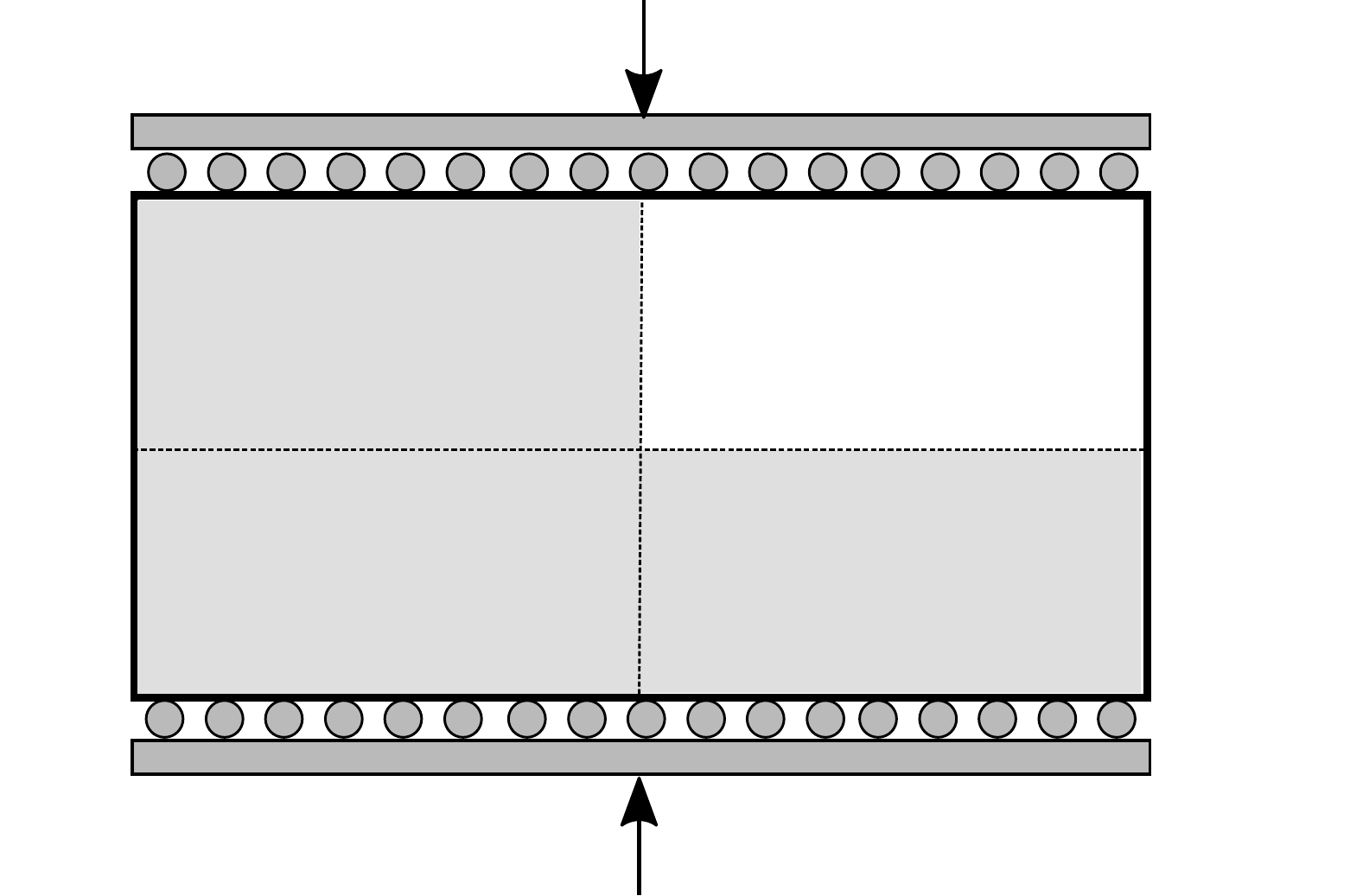}}%
    \put(0.49462947,0.62232717){\color[rgb]{0,0,0}\makebox(0,0)[lt]{\lineheight{1.25}\smash{\begin{tabular}[t]{l}$2F$\end{tabular}}}}%
    \put(0.49456506,0.02159292){\color[rgb]{0,0,0}\makebox(0,0)[lt]{\lineheight{1.25}\smash{\begin{tabular}[t]{l}$2F$\end{tabular}}}}%
    \put(0.61367311,0.38348002){\color[rgb]{0,0,0}\makebox(0,0)[lt]{\lineheight{1.25}\smash{\begin{tabular}[t]{l}$\Omega$\end{tabular}}}}%
    \put(0.6119102,0.45920121){\color[rgb]{0,0,0}\makebox(0,0)[lt]{\lineheight{1.25}\smash{\begin{tabular}[t]{l}$\Gamma_4$\end{tabular}}}}%
    \put(0.61160732,0.2754182915){\color[rgb]{0,0,0}\makebox(0,0)[lt]{\lineheight{1.25}\smash{\begin{tabular}[t]{l}$\Gamma_2$\end{tabular}}}}%
    \put(0.41066492,0.41168941){\color[rgb]{0,0,0}\makebox(0,0)[lt]{\lineheight{1.25}\smash{\begin{tabular}[t]{l}$\Gamma_1$\end{tabular}}}}%
    \put(0.85868456,0.40936195){\color[rgb]{0,0,0}\makebox(0,0)[lt]{\lineheight{1.25}\smash{\begin{tabular}[t]{l}$\Gamma_3$\end{tabular}}}}%
  \end{picture}%
\endgroup%
    \hspace*{0.1\columnwidth}%
    \raisebox{1.5cm}{
    \begin{tabular}{|rl|rl|}
    \hline
         $a=$&  100 &  $\mu =$& 2.475e9 \\ \hline
         $b=$&  10 & $\lambda =$& 1.65e9 \\ \hline
         $T=$&  5e4 & $\alpha =$&  1\\ \hline
         $\Delta t =$& 1e1 & $c_0 =$& 6.0606e-11\\ \hline
         $F =$& 6e8 & $K =$& 9.869e-11 \\ \hline
    \end{tabular}
    }
    \caption{(left) The domain for the Mandel's problem and (right)
    the material and geometric parameters.}
    \label{dom_mandel}
\end{figure}

The values of the geometric and material parameters are reported in the
table in the right of \Cref{dom_mandel}. 
\Cref{fig:sol_mandel} illustrates the numerical results obtained with both 4F-MFEM and MF-MFEM, with the analytical solutions. Excellent matching is observed for both schemes. Moreover, we obtain zero rotation, numerically, everywhere in the domain, which corresponds exactly to the zero rotation in the true solution.

\begin{figure}
    \centering
    \includegraphics[width=0.475\textwidth]{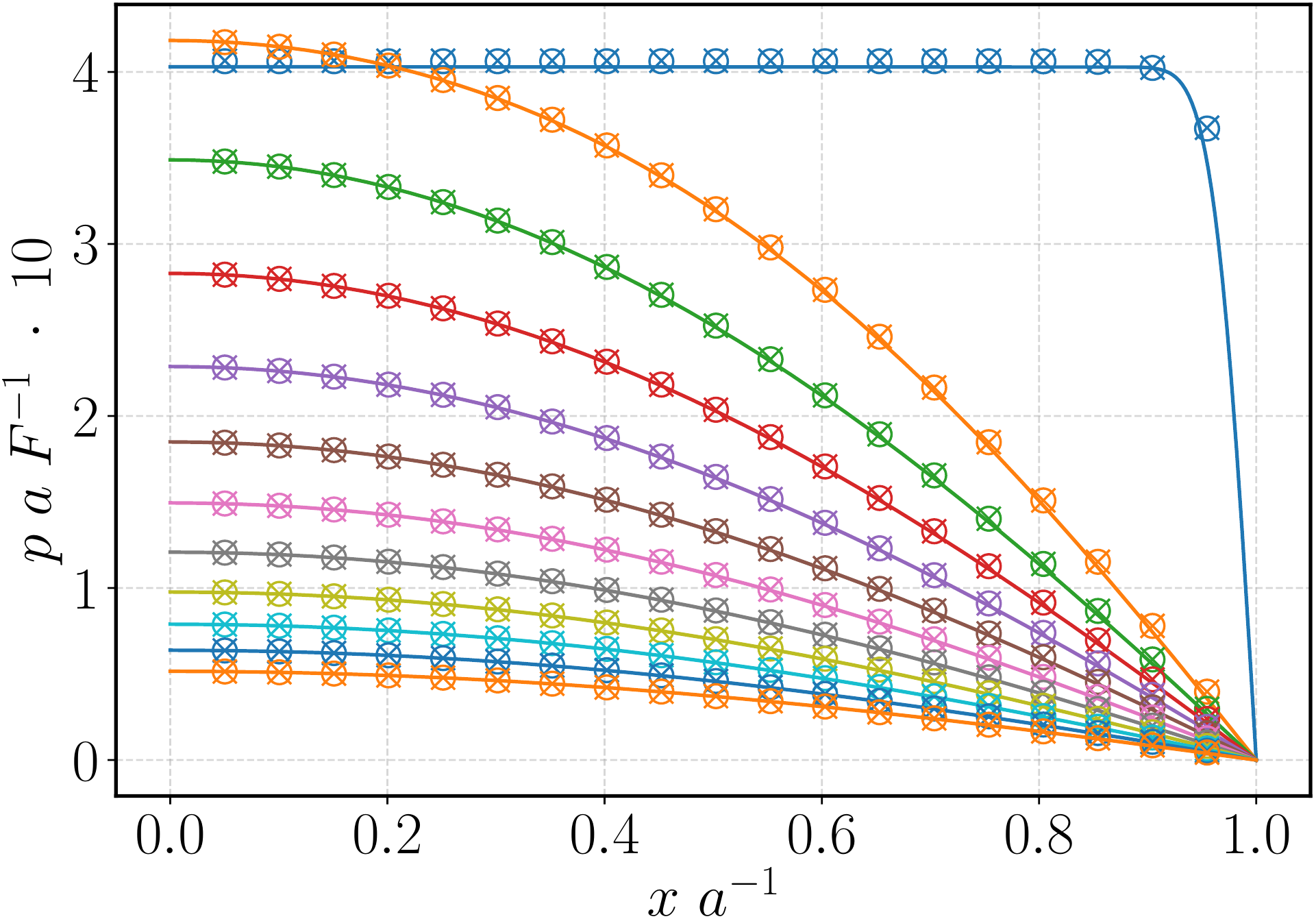}%
    \hfill%
    \includegraphics[width=0.475\textwidth]{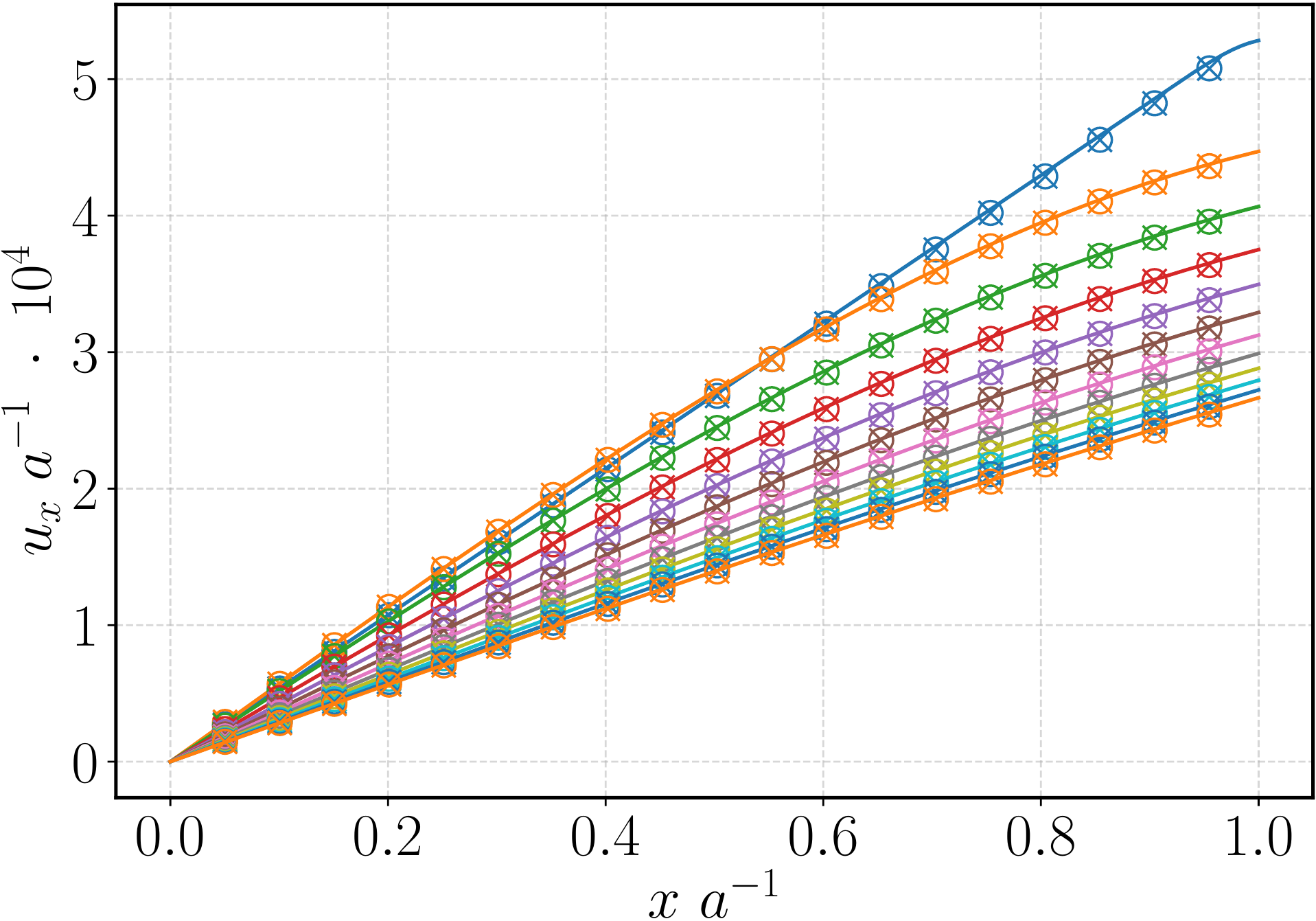}
    \includegraphics[width=0.9\textwidth]{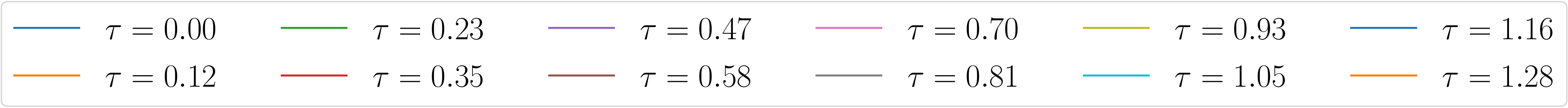}
    \caption{The scaled pressure (left) and the scaled
    displacement (right) along the $x$ axis, for different times.
    The continuous lines represent the
    analytical solution at different times $\tau$. The $\circ$ and $\times$ markers are the solutions computed
    by 4F-MFEM and MR-MFEM, respectively.}
    \label{fig:sol_mandel}
\end{figure}

\section{Concluding remarks}
\label{sec:conclusions}

We have proposed mixed finite element methods for rotation-based poroelasticity
in which the rotation variable is approximated in $H(\curl, \Omega)$. Through a
hybridization technique, the rotation and flux variables can be locally
eliminated, leading to a numerical scheme that uses $\mathbb{RT}_0 \times
\mathbb{P}_0$ for the solid displacement and fluid pressure. A priori analysis
shows that the proposed methods are stable and convergent. By using weighted
norms, we moreover derive robust preconditioners.

We remark on the limitations of our approach. First, we note that the
formulation \eqref{eq: momentum balance} of the elasticity equations as a
weighted vector Laplacian is based on a spatially constant Lam\'e parameter
$\mu$. However, the techniques from \cite{anaya2021velocity} may be applicable
for the more general case of varying $\mu$.  Second, the natural boundary
conditions for this formulation do not immediately accommodate the typical
no-stress boundary condition. In our implementation of Mandel's problem, we
therefore augmented the boundary conditions.  Third, we note that nearly
incompressible materials are not naturally handled by this formulation since
large values of $\lambda$ lead to undesirable scaling in the matrix. To capture
the incompressible limit of $\lambda \to \infty$, it may be desirable to
introduce a solid pressure variable similar to \cite{lee2019mixed}.
We aim to overcome these limitations in future work.

Finally, the numerical results presented show optimal convergence rates and,
when the preconditioner is applied, a stable number of iterations for a wide
range of data values, all in accordance with the developed
theory.

\section*{Acknowledgments}
\label{sec:acknowledgments}

The authors warmly thank Ana Budi\v{s}a, Jhabriel Varela, and Ludmil T. Zikatanov for fruitful discussions.

\bibliographystyle{siamplain}
\bibliography{references}

\end{document}